\newcommand{\astfootnote}[1]{
  \let\oldthefootnote=\thefootnote
  \setcounter{footnote}{0}
  \renewcommand{\thefootnote}{\fnsymbol{footnote}}
  \footnotetext{#1}
  \let\thefootnote=\oldthefootnote
}
\numberwithin{equation}{section}
\numberwithin{figure}{section}
\theoremstyle{plain}
\newtheorem{theorem}{Theorem}[section]
\newtheorem{lemma}[theorem]{Lemma}
\newtheorem{proposition}[theorem]{Proposition}
\newtheorem{corollary}[theorem]{Corollary}
\newtheorem{conjecture}{Conjecture}[section]
\theoremstyle{definition}
\newtheorem{definition}{Definition}[section]
\newtheorem{remark}{Remark}[section]
\newcommand{\bitem}{\begin{itemize}}
\newcommand{\eitem}{\end{itemize}}
\newcommand{\mc}[1]{\mathcal{#1}}
\newcommand{\mb}[1]{\mathbb{#1}}
\newcommand{\N}{\mathbb{N}}
\newcommand{\R}{\mathbb{R}}
\newcommand{\B}{\mathbb{B}}
\newcommand{\EE}{\mathbb{E}}
\newcommand{\bpm}{\begin{pmatrix}}
\newcommand{\epm}{\end{pmatrix}}
\newcommand{\bsm}{\left(\begin{smallmatrix}}
\newcommand{\esm}{\end{smallmatrix}\right)}
\newcommand{\T}{\top}
\newcommand{\ol}[1]{\overline{#1}}
\newcommand{\la}{\langle}
\newcommand{\ra}{\rangle}
\newcommand{\mrm}[1]{\mathrm{#1}}
\newcommand{\row}[2]{{#1}_{#2,\bullet}}
\newcommand{\col}[2]{{#1}_{\bullet,#2}}
\newcommand{\veps}{\varepsilon}
\newcommand{\eins}{\mathbb{1}}
\DeclareMathOperator{\dom}{dom}
\DeclareMathOperator{\intr}{int}
\DeclareMathOperator{\rint}{rint}
\DeclareMathOperator{\rbd}{rbd}
\DeclareMathOperator{\argmin}{arg min}
\DeclareMathOperator{\supp}{supp}
\DeclareMathOperator{\cosupp}{cosupp}
\DeclareMathOperator{\aff}{aff}
\DeclareMathOperator{\cone}{cone}
\DeclareMathOperator{\cl}{cl}
\DeclareMathOperator{\sign}{sign}
\DeclareMathOperator{\dist}{dist}
\DeclareMathOperator{\TV}{TV}
\title{Performance Bounds For Co-/Sparse Box Constrained Signal Recovery$^\dagger$}
\author{}
\keywords{Compressed Sensing, Discrete Tomography, Conic Geometry, Linear Inverse Problems, Convex Programming}
\begin{document}
\maketitle
\thispagestyle{plain}
\begin{center}
{\bf Jan Kuske and Stefania Petra}
\end{center}
\vspace{3mm}
\begin{abstract}
The recovery of structured signals from a few linear measurements is a central point in both compressed sensing (CS) and discrete tomography. In CS the signal structure is described by means of a low complexity model e.g. co-/sparsity.
The CS theory shows that any signal/image can be undersampled at a rate dependent on its intrinsic complexity. Moreover, in such undersampling regimes, the signal can be recovered by sparsity promoting convex regularization like $\ell_1$- or total variation ($\TV$-) minimization.  Precise relations between many low complexity measures and the sufficient number of random measurements are known for many sparsity promoting norms. However, a precise estimate of the undersampling rate for the TV seminorm is still lacking. We address this issue by: a) providing dual certificates testing uniqueness of a given cosparse signal with bounded signal values, b) approximating the undersampling rates via the statistical dimension of the TV descent cone and c) showing empirically that the provided rates also hold for tomographic measurements.
\end{abstract}
\astfootnote{%
  $^\dagger$ This is an invited paper to the special issue of Analele Stiintifice ale Universitatii Ovidius Constanta (ASUOC) on the occasion of the 11th Workshop on Mathematical Modelling of Environmental and Life Sciences Problems organised in October 12-16, 2016, Constanta, Romania (\url{http://www.ima.ro/mmelsp.htm}). It was accepted for publication in 2017 but not yet published.\\
  The authors would like to stress that this is not a recent  paper and is not referencing other works that have appeared since 2017. 
}
\newpage
\tableofcontents
\newpage
%
\section{Introduction}\label{sec:intro}

The current work is motivated by the image reconstruction problem from few linear measurements, e.g. discrete tomography \cite{DTbook}, where typically bounds on the image values are available.
This explains why we are concerned with the recovery of \emph{box constrained} 
signals. Moreover, we aim to reconstruct an image that exhibits some structure, e.g. sparsity/gradient-sparsity.

Recovering a structured signal/image from few linear measurements is a central point in both
compressed sensing (CS) \cite{Rauhut-book} and discrete tomography \cite{DTbook}. In CS the signal structure is described by means of a low complexity model. For instance, signal sparsity $s:=\|x\|_0$, that equals the number of non-zero components in $x$,
and the associated $\ell_0$-minimization is typically relaxed to $\ell_1$-minimization. 
The theory of CS implies that if the number of measurements $m\ge C\cdot s\log(n/s)$ and the entries of the measurement matrix $A$ follow a normal distribution, 
i.e.~$a_{ij} \sim \mc{N}(0,1)$,
the solution of $\ell_0$-minimization  is \emph{equal} to the solution of  $\ell_1$-minimization.
Thus, a combinatorial problem can be solved by a convex optimization problem.

In the present work we consider the noiseless setting
\begin{equation}\label{eq:Ax=b}
A\ol{x}=b,
\end{equation}
where the unknown (structured) signal $\ol{x}\in\R^n$ is undersampled either by
a Gaussian matrix $A\in\R^{m\times n}$, i.e.~$A$ has  independent standard normal entries, or by a  
tomographic projection matrix $A$. 

For the reconstruction of the sparse or gradient-sparse box constrained 
signal of interest we consider some structure enforcing regularizer $f:\R^n\to\ol{\R}$, that is a proper convex function, 
and solve
\begin{equation}\label{eq:minf-exact-constr}
\min_{x} f(x)\qquad {\rm subject~to} \quad Ax=A\ol{x},
 \end{equation}
where $f$ is specialized to one of the following functions 

\begin{subequations} \label{def-f}
\begin{align}
f_1(x) &= \|x\|_1 , \label{def-f1-l1}\\
f_2(x) &= \|x\|_1  + \delta_{\R^n_+}(x) ,  \label{def-f2-l1-nonneg}\\
f_3(x) &= \|x\|_1  + \delta_{{[0,1]}^n}(x)  , \label{def-f3-l1-box}\\
f_4(x) &= \|D x\|_1 , \label{def-f4-TV} \\
f_5(x) &= \|Dx\|_1  + \delta_{\R^n_+}(x) , \label{def-f5-TV-nonneg}\\
f_6(x) &= \|Dx\|_1  + \delta_{{[0,1]}^n}(x).  \label{def-f6-TV-box}
\end{align}
\end{subequations}
In the first part of the paper $D\in\R^{p\times n}$ is an arbitrary matrix, while in the latter part $D$ is  specialized to the discrete gradient operator, 
in view of the (anisotropic) 
$\TV$-regularizer we are interested in.
\subsection{Contribution, Related Work, Organization}
Our main objectives are: 
\begin{description}
\item[{\it Individual Uniqueness}] We derive practical tests for individual uniqueness: 
Given a signal $\ol{x}$ and an instance of the 
optimization problem above, can we efficiently verify whether the signal is the unique solution?
Our approach will be to apply the uniqueness condition $0\in\intr{\partial f (\ol{x})}$ \cite{Gilbert2017} to several types of polyhedral functions $f$
including $\ell_1$- and $\TV$-minimization with and without box constraints, as detailed in
Sect.~\ref{sec:individual-unique}.  Such testable uniqueness conditions for $\TV$-minimization were also
developed in \cite{jorgensen2015testable, Zhang2016}, but these use different mathematical tools. Our derivation is shorter
and  additionally considers  box constrained $\TV$-minimization.
\item[{\it Probabilistic Uniqueness}] We wish to accurately describe the relation between the signal sparsity and the number of measurements that guarantee uniqueness with high probability. We build on recent CS theory 
\cite{Amelunxen2014living, Oymak2016denoising,DonohoJM13,Foygel2014}
that upper bounds the statistical dimension of the descent cone of the structure enforcing regularization.
The difficulty lies in the fact that certain CS guarantees are missing for $\TV$-minimization.
Moreover, the CS recovery performance bounds based on the statistical dimension
(also called phase transitions) do not apply directly to tomography, since
tomographic measurements are known to be ill-conditioned on the
common structured signal classes from CS. 
Phase transitions in CS are discussed in Section \ref{sec:prob-unique}.
\item[{\it Empirical validation}]
We verify the theoretical probabilistic results for Gaussian matrices 
along with the theoretical uniqueness tools and compare them to the recovery and uniqueness performance of typical tomographic measurements. 
We stress that  tomographic projections fall short of the common assumptions (e.g.~restricted isometry property) underlying compressed sensing, see
\cite{Petra2014}, and CS recovery guarantees \emph{do not apply} for tomography. In view of its practical importance (reducing radiation exposure) we emphasize the need of accurately describing how much undersampling is tolerable in tomographic recovery.
We empirically validate our theoretical results in Sect. \ref{sec:experiments}. 
Hence, our work is closely related to the work in \cite{jorgensen2015testable}, but additionally provides
approximations to the \emph{theoretical} phase transition curves that are validated empirically.
\end{description}

\subsection{Notation}
For $n \in \N$, we use the shorthands $[n] = \{1,2,\dotsc,n\}$.
We define $\R_{+}^n:=\{x\in\R^n \colon x_i \ge 0, \forall i\in{[n]}\}$ and
$\R_{++}^n:=\{x\in\R^n \colon x_i > 0, \forall i\in{[n]}\}$. Analogously, we define $\R_{-}^n$ and $\R_{--}^n$.
The extended real line is denoted by $\ol{\R}:=\R\cup \{\pm \infty\}$.
 Vectors $x\in\R^n$ are column vectors and indexed by superscripts. $x^{\T}$ denotes the transposed vector $x$ and
$\la x, y \ra$ or $x^\T y$ the Euclidean inner product. $\T$ stands for the transpose. 
For a vector $x\in  {\R}^{n}$, $\sign (x) \in  {\R}^{n}$ is the \emph{sign vector} of $x$ defined
component-wise   as ${\sign}(x)_{i}=1$ if $x_{i}>0$ and $ {\sign}(x)_{i}=-1$ if $x_{i}<0$.
For some matrix $A\in\R^{m\times n}$,  the nullspace is denoted by $\mc{N}(A)$ and its range by $\mc{R}(A)$.
For some matrix $\Omega\in\R^{p\times n}$ we denote the submatrix indexed by the rows in $\Lambda\subset{[p]}$ by
 $\row{\Omega}{\Lambda}$. Similarly, for some matrix $A\in\R^{m\times n}$ the columns indexed by $S\subset{[n]}$ are denoted by  $\col{A}{S}$.
 The complement of an index set $S\subset{[n]}$ will be denoted by $S^c:={[n]}\setminus S$.
For a subspace $X$, $X^\perp$ will denote its orthogonal complement. The affine hull of a set $X$, see Appendix, is denoted by $\aff (X)$.
The conic hull is denoted by $\cone X$.
We use the $\ell_1$-norm  $\|x\|_1=\sum_{i\in{[n]}}|x_i|$ and the maximum norm $\|x\|_\infty:=\max_{i\in{[n]}}|x_i|$.
The $\ell_0$-measure (not a norm!) stands for the cardinality of the support of $x$, i.e. $\|x\|_0:=|\supp(x)|$, with $\supp(x) =\{i\in{[n]}\colon x_i\ne 0\}$.
The cosupport of $x$ is denoted by $\cosupp(x) =\{i\in{[n]}\colon x_i = 0\}$.

We define the one-dimensional discrete derivative operator ${\partial_n} \colon \R^n \to \R^{n-1}$,  $(\partial_n)_{ij} =-1$ if $i=j$, $(\partial_n)_{ij} =1$ if $j=i+1$
and $(\partial_n)_{ij} =0$ otherwise. For $m\times n$ images the discrete gradient operator is defined as
\begin{equation}\label{eq:def-nabla}
\nabla = \bpm
I_{n} \otimes \partial_{m}  \\
\partial_{n} \otimes I_{m}  \\
\epm
\in \R^{p \times n},
\end{equation}
where $\otimes$ denotes the Kronecker product and $I_{m}, I_{n}$, are identity matrices. The \emph{anisotropic} discretized total variation (TV) is given by $ \TV(x) := \|\nabla x \|_{1} $. The image gradient sparsity is given by $ \|\nabla x \|_{0}$.

We further denote the indicator function of a convex set
as  $\delta_{C}:\R^n\to\ol{\R}$. Recall,  $\delta_{C}(x)=0$ if $x\in C$ and $\delta_{C}(x)=\infty$, 
otherwise. The subdifferential of a function $f$ at $x$ will be denoted by $\partial f(x)$, see Appendix for the definition.
We will use that the subdifferential of the indicator function equals the normal cone $\partial \delta_{C}(x)=N_C(x):=\{v\in\R^n\colon v^\top(y-x)\le 0, \forall y\in C\}$. 
The distance of a vector $x$
to $C$ is denoted by $\dist(x,C)$ and defined as $\dist(x,C)= \min_{y\in C} \|x-y\|$. Finally, $\EE(X)$ will denote the expectation of the random variable $X$.

\section{Individual  Uniqueness and Dual Certificates}\label{sec:individual-unique}
In this section we derive testable uniqueness conditions for our six problems of interest.
We will first provide some recovery conditions called \emph{dual certificates}.
More precisely, consider that we are given a specific vector $\ol{x}$ and we have to decide whether it is the \emph{unique}  solution of
\eqref{eq:minf-exact-constr}.  We will provide \emph{necessary and sufficient} conditions which certify the existence and uniqueness of a solution \eqref{eq:minf-exact-constr} for the case of a polyhedral function $f$. Our analysis closely follows \cite{Gilbert2017}.
These conditions are formulated in terms of a solution to the dual problem of  \eqref{eq:minf-exact-constr}.
For the cases enumerated in \eqref{def-f}, we will see that it is possible to test uniqueness by simply solving a linear program.

In this section, it will be useful to recast \eqref{eq:minf-exact-constr} as an unconstrained optimization problem 
\begin{equation}\label{eq:f-obj-Ax=b}
\min f(x)\quad \text{subject to}\quad Ax=b \quad  \Longleftrightarrow \quad \min_{x\in\R^n} g(x), \quad g(x):= f(x) +\delta_{\mc{X}}(x),
\end{equation}
where $\delta_{\mc{X}}$ denotes the indicator function of the \emph{feasible set} of \eqref{eq:minf-exact-constr} with
\begin{equation}\label{eq:def-feas-set-X}
\mc{X}:=\{x\in  {\R}^{n}:Ax=b\},
\end{equation}
$f$ the objective in \eqref{eq:minf-exact-constr}  and
\begin{equation}\label{eq:def-b}
b:=A \ol{x}.
\end{equation}

Hence, the function $g$ considered in this section is \emph{convex polyhedral}, meaning that its epigraph is a  convex polyhedron.

\subsection{Uniqueness of a Minimizer of a Polyhedral Function}

We will use that the condition $0\in  {\intr}~\partial g(\ol{x})$ is equivalent to the uniqueness of the solution $\ol{x}$, in the case of a polyhedral convex function $g$. The following result gives us a necessary and sufficient condition for the uniqueness of a polyhedral function minimizer
and was recently emphasized in \cite{Gilbert2017}.
\begin{lemma}{\cite[Lem 2.2]{Gilbert2017}} \label{lem:unique-minimizer-polyhedral} 
Let $g\in\mc{F}_0(\R^n)$ be any proper, convex and lsc function that is in addition polyhedral. Then
\begin{equation}\label{eq:lemma-unique-polyhedral}
 \argmin g=\{\ol{x}\}\ \Leftrightarrow\ 0\in  {\intr} ~\partial g(\ol{x}).
\end{equation}
\end{lemma}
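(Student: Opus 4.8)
The plan is to reduce the uniqueness statement to a strict-positivity property of the directional derivative, and then translate that property into the interior condition on the subdifferential by means of support functions. First I would recall the two classical facts valid for any proper convex lsc $g$: Fermat's rule, $\ol{x}\in\argmin g \Leftrightarrow 0\in\partial g(\ol{x})$, and the identity $g'(\ol{x};d)=\sigma_{\partial g(\ol{x})}(d):=\sup_{v\in\partial g(\ol{x})}\langle v,d\rangle$ relating the one-sided directional derivative to the support function of the subdifferential. This identity holds at every $\ol{x}\in\dom g$ once $g$ is polyhedral, with the understanding that $+\infty$ values along infeasible directions are admitted on both sides.

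The key structural input is polyhedrality. For a polyhedral convex $g$ the map $d\mapsto g'(\ol{x};d)$ is itself polyhedral and positively homogeneous, and there is a neighborhood $U$ of $0$ on which $g(\ol{x}+d)=g(\ol{x})+g'(\ol{x};d)$. Consequently $\ol{x}$ is a strict local minimizer if and only if $g'(\ol{x};d)>0$ for every $d\neq 0$ (positive homogeneity lets me scale any direction into $U$). By convexity a strict local minimizer is automatically the unique global minimizer: if some $y\neq\ol{x}$ attained the minimum, convexity would force $g$ to be constant and equal to $g(\ol{x})$ along the whole segment $[\ol{x},y]$, contradicting strictness near $\ol{x}$. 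This yields the intermediate equivalence $\argmin g=\{\ol{x}\}\Leftrightarrow g'(\ol{x};d)>0$ for all $d\neq 0$.

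Next I would isolate the clean geometric lemma: for a nonempty closed convex $C\subseteq\R^n$ one has $\sigma_C(d)>0$ for all $d\neq 0$ if and only if $0\in\intr C$. The implication ($\Leftarrow$) is immediate, since $C\supseteq B(0,\varepsilon)$ forces $\sigma_C(d)\ge\varepsilon\|d\|>0$. For ($\Rightarrow$) I would argue by contraposition: if $0\notin\intr C$, a separating or supporting hyperplane provides a direction $d\neq 0$ with $\langle v,d\rangle\le 0$ for all $v\in C$, hence $\sigma_C(d)\le 0$. Applying this lemma with $C=\partial g(\ol{x})$, and combining it with the support-function identity and the intermediate equivalence, closes both implications of \eqref{eq:lemma-unique-polyhedral}; note in particular that the backward direction already contains $0\in\partial g(\ol{x})$, so via Fermat $\ol{x}$ is indeed a minimizer before uniqueness is even invoked.

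I expect the only genuine subtlety to be the behavior at the boundary of $\dom g$. One must make sure that both the directional-derivative/support-function identity and the local linear representation survive when $\ol{x}\in\bd(\dom g)$ and some directions $d$ are infeasible, so that $g'(\ol{x};d)=+\infty$ there. Polyhedrality is exactly what guarantees the directional derivative is a closed positively homogeneous function and that these identities remain valid, with the $+\infty$ values consistently matching on both sides; such directions therefore satisfy the strict-positivity condition trivially and do not disrupt the argument. Everything else is routine convex analysis.
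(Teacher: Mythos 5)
Your proof is correct. Note that the paper itself does not prove this lemma --- it imports it verbatim as \cite[Lem.~2.2]{Gilbert2017} and only remarks that necessity uses polyhedrality while sufficiency does not --- so there is no in-paper argument to diverge from; what you have written is a sound self-contained proof along the standard route. The chain ``unique minimizer $\Leftrightarrow$ $g'(\ol{x};d)>0$ for all $d\neq 0$ $\Leftrightarrow$ $\sigma_{\partial g(\ol{x})}(d)>0$ for all $d\neq 0$ $\Leftrightarrow$ $0\in\intr\partial g(\ol{x})$'' is exactly how this equivalence is usually established, and you correctly localize where polyhedrality is indispensable: in the necessity direction, where the local identity $g(\ol{x}+d)=g(\ol{x})+g'(\ol{x};d)$ and the closedness of $d\mapsto g'(\ol{x};d)$ (hence exact equality with the support function, rather than only after taking closures) are what turn strict minimality into strict positivity of the directional derivative --- a step that fails for smooth strictly convex functions. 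Your handling of the $+\infty$ directions at the boundary of $\dom g$ is also the right thing to worry about and is resolved correctly. One small remark, consistent with the paper's own comment: the sufficiency direction does not need any of this machinery --- from $B(0,\veps)\subset\partial g(\ol{x})$ one gets $g(y)\ge g(\ol{x})+\veps\norm{y-\ol{x}}$ directly from the subgradient inequality with $v=\veps(y-\ol{x})/\norm{y-\ol{x}}$ --- so you could streamline that half and reserve the polyhedral identities for necessity alone. The two background facts you invoke without proof (the support-function identity and the local linearization of a polyhedral convex function) are standard and appropriately cited as such.
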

We note: the necessity part uses polyhedrality, the sufficiency part is straightforward 
and does not require polyhedrality. We will see, in Sect. \ref{sec:prob-unique}, Prop. \ref{prop:unique_cond_intersection},
an alternative uniqueness condition for the problem \eqref{eq:minf-exact-constr}. The advantage of \eqref{eq:lemma-unique-polyhedral}
is that it directly allows the derivation of testable uniqueness conditions as detailed next.

\subsection{Certifying Recovery of Individual Vectors via Dual Certificates}
In this section we show how to decide if a given solution for \eqref{def-f} is unique. 
To this end, we derive uniqueness conditions for the problem 

\begin{align} \label{eq:f-general-box}
 \min_{x\in\R^n} \|Dx\|_1 + \delta_{\mc{X}}(x) + \delta_{{[l,u]}}(x),
\end{align}
where $l_i\in\R\cup \{-\infty\}$, $u_i\in\R\cup \{+\infty\}$, $l_i < u_i$, $i\in{[n]}$. Hence, \eqref{eq:f-general-box} can be regarded as a generalization of \eqref{def-f1-l1}-\eqref{def-f6-TV-box}. For example \eqref{def-f2-l1-nonneg} can be obtained from \eqref{eq:f-general-box} by setting $D:=I_n$, $l:=0\in\R^n$ and $u_i:=+\infty$, $i\in{[n]}$.
In order to apply the uniqueness criterion of Lem.~\eqref{lem:unique-minimizer-polyhedral} we will need to calculate
the subdifferential of the objective function in \eqref{eq:f-general-box}. To compute the subdifferentials at a feasible point $\ol{x}$ one can apply the sum rule \cite[Cor. 10.9]{rockafellar1wets}

\begin{align}
  \partial \left(  \|D\ol{x}\|_1 + \delta_{\mc{X}}(\ol{x}) + \delta_{{[l,u]}^n}(\ol{x}) \right) =   \partial \|D\ol{x}\|_1 + \partial \delta_{\mc{X}}(\ol{x}) + \partial \delta_{{[l,u]}^n}(\ol{x}).
\end{align}
Since all individual  components are polyhedral functions and $\ol{x}\in\dom \|D\cdot \|_1\cap {\mc{X}} \cap {{[l,u]}^n}$
holds by construction, we can simply add the subdifferentials  of the individual terms
\begin{subequations}
\begin{equation}\label{eq:subdiff-Dl1}
 \partial \|D\ol{x}\|_1=\{ D^\top \alpha \colon \alpha_{\Lambda^c} = \sign ( \row{D}{\Lambda^c} \ol{x} ) , \| \alpha_{\Lambda} \|_\infty \le 1  \} ,
\end{equation}
\begin{equation}\label{eq:subdiff-ind-X}
\partial\delta_{\mc{X}}(\ol{x}) = N_\mc{X}(\ol{x}) =\mc{N}(A)^\perp,
\end{equation}
\begin{equation}\label{eq:subdiff-ind-box}
\partial\delta_{{[l,u]}}(x) = N_{[l,u]}(\ol{x}).
\end{equation}
\end{subequations}

The normal cone corresponding to the box constraints reads
\begin{equation}\label{eq:def-normal-cone-box}
N_{[l,u]}(\ol{x}) = N_{[l_1,u_1]}(\ol{x}_1) \times \dots \times N_{[l_n,u_n]}(\ol{x}_n) =  \R^{|S_l|}_- \times \{0\}^{|S_{lu}|}\times \R^{|S_u|}_{+},
\end{equation}
since 

\begin{equation}
 N_{[l_i,u_i]}(\ol{x}_i) =\begin{cases} 0, & \ol{x}_i \in (l_i,u_i),\\
 (-\infty,0], & \ol{x}_i = l_i,\\
 [0,\infty), & \ol{x}_i = u_i,
 \end{cases}
\end{equation}

where  $S_l=\{i\in {[n]}\colon x_i =l_i \}$, $S_u=\{i\in {[n]}\colon x_i =u_i \}$
and $S_{lu}=(S_l\cup S_u)^c=\{i\in {[n]} \colon x_i\in (l_i,u_i)\}$.
If for example we specialize \eqref{eq:f-general-box} to \eqref{def-f2-l1-nonneg}, the normal cone of the nonnegative
orthant becomes relevant and specializes to 
\begin{equation}\label{normal-come-orthant}
\partial \delta_{\R^n_+}(\ol{x}) =  
N_{\R^n_+}(\ol{x}) = N_{\R_+}(\ol{x}_1) \times \dots \times N_{\R_+}(\ol{x}_n) = \{0\}^{|S|}\times \R^{|S^c|}_{-},
\end{equation}
since $S_u=\emptyset$, $S_{lu}=\{i\in {[n]} \colon \ol{x}_i\in (0,+\infty)\}=\supp(\ol{x})=:S$ and $S_l=S^c$.
Based on the considerations above we now derive the main result of this section.
\begin{theorem}[Dual Certificate] \label{thm:dual_certificate}
  A solution $\ol{x}$ to \eqref{eq:f-general-box} is unique, if and only if the following conditions hold
  \begin{itemize}
    \item[(i)] $\mc{N}(A) \cap \mc{N}(\row{D}{\Lambda}) \cap \mc{N}(\Psi) = \{0\},$
    \item[(ii)] $\exists \alpha \in \mb{R}^p,\exists \mu \in \mb{R}^n :$ \\
     $D^\top \alpha + \Psi \mu \in \mc{R}(A^\top), \alpha_{\Lambda^c} = \sign(\row{D}{\Lambda^c} \ol{x}), \|\alpha_{\Lambda}\|_\infty < 1, \mu > 0, $
  \end{itemize}
  where $\Lambda := \cosupp(D\ol{x})$ and $\Psi=\Psi(\ol{x})$ is a diagonal matrix that depends on $\ol{x}$ with entries
  \begin{align}
    \Psi_{ii} = \begin{cases}
      -1, &  \ol{x}_i = l_i, \\
      1, &  \ol{x}_i = u_i, \\
      0, & \text{otherwise.}
    \end{cases}
  \end{align}
  \begin{proof}
    By Lem.~\ref{lem:unique-minimizer-polyhedral} the uniqueness of  $\ol{x}$ as the unique solution of \eqref{eq:f-general-box} is equivalent to 
    \begin{equation}\label{eq:main-proof-1}
      0 \in \intr \left( \partial \left(  \|D\ol{x}\|_1 + \delta_{\mc{X}}(\ol{x}) + \delta_{{[l,u]}^n}(\ol{x}) \right)  \right).
    \end{equation}
    We rewrite \eqref{eq:main-proof-1} in the form of the two conditions
    \begin{itemize}
      \item[(a)] $\mb{R}^n = \aff\left( \partial \left(  \|D\ol{x}\|_1 + \partial \delta_{\mc{X}}(\ol{x}) + \partial \delta_{{[l,u]}}(\ol{x}) \right) \right) $,
      \item[(b)] $0 \in \rint \left( \partial \left(  \|D\ol{x}\|_1 + \partial \delta_{\mc{X}}(\ol{x}) + \partial \delta_{{[l,u]}}(\ol{x}) \right)  \right) $.
    \end{itemize}
  Condition (a) ensures that the interior of the subdifferential is not-empty, and can be recast as
  \begin{equation}\label{eq:Rn-decomp}
    \mb{R}^n = \aff( \partial   \|D\ol{x}\|_1) +  \aff(  \partial \delta_{\mc{X}}(\ol{x}) )+  \aff(  \partial \delta_{{[l,u]}}(\ol{x})).
  \end{equation}
By \eqref{eq:subdiff-Dl1} we get

  \begin{align}
   \aff (\partial \| D \ol{x} \|_1 ) = \ & \aff ( \{ D^\top \alpha \colon \alpha_{\Lambda^c} = \sign ( \row{D}{\Lambda^c} \ol{x} ) , \| \alpha_{\Lambda} \|_\infty \le 1  \}  )  \\
    = \ & \aff ( {\row{D}{\Lambda^c}}^\top \sign (\row{D}{\Lambda^c} \ol{x}  ) + \{ {\row{D}{\Lambda}}^\top \alpha \colon \|\alpha \|_\infty \le 1 \}   ) \\
    = \ & \underbrace{ {\row{D}{\Lambda^c}}^\top \sign (\row{D}{\Lambda^c} \ol{x} )}_{=:y_0} + \aff ( \{ \row{D}{\Lambda}^\top \alpha \colon \|\alpha \|_\infty \le 1 \}   ) \\
    = \ & y_0 + \mc{R}(  \row{D}{\Lambda}^\top ) . \label{eq:JanTodo_y0} 
  \end{align}
By \eqref{eq:subdiff-ind-X} we obtain
\begin{equation}
    \aff( \partial \delta_{\mc{X}} (\ol{x}) ) =  \aff( N_{\mc{X}}(\ol{x}) ) =   \aff ( \mc{N}(A)^\perp) = \mc{N}(A)^\perp.
  \end{equation}
And finally, by  \eqref{eq:def-normal-cone-box} we write

\begin{align}
  \aff( \partial \delta_{[l,u]} (\ol{x}) ) = \ & \aff( N_{[l,u]}(\ol{x}) ) = \aff( \R^{|S_l|}_- \times \{0\}^{|S_{lu}|}\times \R^{|S_u|}_{+})\\
  = \ &  \R^{|S_l|} \times \{0\}^{|S_{lu}|}\times \R^{|S_u|}      =  \mc{R}(\Psi).
\end{align}
Hence, \eqref{eq:Rn-decomp} is equivalent to $\R^n=\mc{R}(  \row{D}{\Lambda}^\top) + \mc{N}(A)^\perp +  \mc{R}(\Psi)$.
Taking the orthogonal, condition (a) now becomes

\begin{align}
  & \mc{R}(  \row{D}{\Lambda}^\top )^\bot \cap (\mc{N}(A)^\bot )^\bot \cap \mc{R}(\Psi)^\bot \\
  = \ & \mc{N}(\row{D}{\Lambda}) \cap \mc{N}(A) \cap \mc{N}(\Psi) = \{0\},
\end{align}
that yields (i). 

Further we reformulate condition (b). We use that for two convex sets $C_1,C_2$ it holds 

\begin{equation*}
\rint(C_1+C_2) = \rint(C_1) + \rint(C_2),
\end{equation*} 
see \eqref{eq:sum-relint}, and so we get

\begin{align}
  0 = & \rint ( \partial  \|D\ol{x}\|_1 + \partial \delta_{\mc{X}}(\ol{x}) + \partial \delta_{{[l,u]}}(\ol{x}) ) \\
  = \ & \rint ( \partial   \|D\ol{x}\|_1 )  + \rint ( \partial \delta_{\mc{X}}(\ol{x})) + \rint( \partial \delta_{{[l,u]}}(\ol{x}) )  \\
  = \ & \rint ( y_0 + \{ \row{D}{\Lambda}^\top \alpha \colon \|\alpha \|_\infty \le 1 \}  )  + \rint ( \mc{R}(A^\top) ) + \rint( \R^{|S_l|}_- \times \{0\}^{|S_{lu}|}\times \R^{|S_u|}_{+} ) \\
  = \ &  y_0 + \{   \row{D}{\Lambda}^\top \alpha \colon \|\alpha \|_\infty < 1\}   + \mc{R}(A^\top) +  \R^{|S_l|}_{--} \times \{0\}^{|S_{lu}|}\times \R^{|S_u|}_{++}\\
   = \ &  y_0 + \{   \row{D}{\Lambda}^\top \alpha \colon \|\alpha \|_\infty < 1\}   + \mc{R}(A^\top) + \{ \Psi \mu \colon \mu > 0 \}. 
\end{align}
  This shows (ii) and concludes the proof.
  \end{proof}
\end{theorem}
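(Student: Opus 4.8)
The plan is to invoke Lemma~\ref{lem:unique-minimizer-polyhedral}, which reduces the uniqueness of $\ol{x}$ to the single membership $0 \in \intr \partial g(\ol{x})$, where $g(x) = \|Dx\|_1 + \delta_{\mc{X}}(x) + \delta_{[l,u]}(x)$ is the polyhedral objective of \eqref{eq:f-general-box}. Since each of the three summands is proper, convex, lsc and polyhedral, and $\ol{x}$ lies in the intersection of their domains, the sum rule \cite[Cor.~10.9]{rockafellar1wets} lets me write $\partial g(\ol{x})$ as the Minkowski sum of the three subdifferentials already computed in \eqref{eq:subdiff-Dl1}--\eqref{eq:subdiff-ind-box}. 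I would then split the interior membership into two geometric conditions via the elementary equivalence $0 \in \intr C \iff \aff C = \R^n$ and $0 \in \rint C$: call these (a) full-dimensionality, which forces $\intr C = \rint C$, and (b) relative-interior membership. This is convenient because both $\aff$ and $\rint$ distribute over Minkowski sums, the latter through $\rint(C_1+C_2)=\rint C_1+\rint C_2$ (see \eqref{eq:sum-relint}), so each condition reduces to a termwise computation.

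For (a) I would compute the affine hull of each summand. The $\ell_1$-subdifferential \eqref{eq:subdiff-Dl1} is a translate by the fixed vector $y_0 := \row{D}{\Lambda^c}^\top \sign(\row{D}{\Lambda^c}\ol{x})$ of the symmetric box $\{\row{D}{\Lambda}^\top\alpha : \|\alpha\|_\infty \le 1\}$, whose affine hull is $y_0 + \mc{R}(\row{D}{\Lambda}^\top)$; the indicator of $\mc{X}$ contributes $\mc{N}(A)^\perp$, and the box indicator contributes $\mc{R}(\Psi)$. Requiring the sum of these affine pieces to fill $\R^n$ and then taking orthogonal complements, using $\mc{R}(M^\top)^\perp = \mc{N}(M)$ together with the fact that the complement of a sum is the intersection of the complements, yields exactly condition (i).

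For (b) I would apply $\rint$ termwise. Passing to the relative interior opens every closed inequality simultaneously: $\|\alpha_\Lambda\|_\infty \le 1$ becomes $\|\alpha_\Lambda\|_\infty < 1$, the orthant factors $\R^{|S_l|}_- \times \{0\}^{|S_{lu}|} \times \R^{|S_u|}_+$ become their strict counterparts, compactly $\{\Psi\mu : \mu > 0\}$, while the subspace $\mc{R}(A^\top)=\mc{N}(A)^\perp$ is its own relative interior. Demanding that $0$ be a point of the resulting sum is precisely the existence statement in (ii): there are an $\alpha$ with $\alpha_{\Lambda^c}$ fixed to the sign pattern and $\|\alpha_\Lambda\|_\infty < 1$, a multiplier $\mu > 0$, and an element of $\mc{R}(A^\top)$ summing to zero, which rearranges to $D^\top\alpha + \Psi\mu \in \mc{R}(A^\top)$.

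The main obstacle I anticipate is the bookkeeping in these two computations rather than any single hard idea: confirming that the affine hull of the translated $\ell_1$-box collapses exactly to $y_0 + \mc{R}(\row{D}{\Lambda}^\top)$, and that passing to $\rint$ strictly opens all the inequalities at once while leaving the subspace $\mc{R}(A^\top)$ untouched. The remaining manipulations are routine linear algebra on ranges, nullspaces and orthogonal complements.
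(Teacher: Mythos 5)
Your proposal is correct and follows essentially the same route as the paper's own proof: reduce uniqueness to $0 \in \intr \partial g(\ol{x})$ via Lemma~\ref{lem:unique-minimizer-polyhedral}, decompose the subdifferential by the sum rule, split the interior condition into the affine-hull condition (yielding (i) after taking orthogonal complements) and the relative-interior condition (yielding (ii) via termwise application of \eqref{eq:sum-relint}). The termwise computations you flag as the main bookkeeping obstacles are exactly the ones carried out in the paper, with the same identification of $y_0 + \mc{R}(\row{D}{\Lambda}^\top)$, $\mc{N}(A)^\perp$ and $\mc{R}(\Psi)$ as the three affine pieces.
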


\begin{remark}
By Thm.~\ref{thm:dual_certificate} we can show that $\ol{x}$ is the \emph{unique} solution of \eqref{eq:f-general-box}
if and only if $(i)$ and $(ii)$ holds.  Condition $(i)$ is tested by evaluating whether 
$\bpm A^\top, & \row{D}{\Lambda}^\top, & \Psi^\top\epm^\top$ 
has full column rank. For verifying $(ii)$ we need
to test whether there exist $y\in \R^m$,  $\alpha \in \mb{R}^p$, $\mu \in \mb{R}^n :$
$D^\top \alpha + \Psi \mu = A^\top y, \alpha_{\Lambda^c} = \sign(\row{D}{\Lambda^c} \ol{x}), \|\alpha_{\Lambda}\|_\infty < 1, \mu > 0. $

The second condition $(ii)$ could be verified in practice by minimizing  $\|\alpha_{\Lambda}\|_\infty$ w.r.t. $y,\alpha,\mu$ while respecting the equality constraints
and converting the strict inequality constraint $\mu>0$ to $\mu>=\veps\eins$ for a small
$\veps$, e.g. $\veps=10^{-8}$, to ensure that the inequality is satisfied
strictly. This results in a linear program (LP),

\begin{align}
\min_{t,y,\alpha, \mu} t\qquad \text{s.t.}\quad  -t\eins& \le \alpha_{\Lambda} \le t\eins,\\
A^\top y&=D^\top \alpha + \Psi \mu \\
\mu &\ge \veps \eins \ . 
\end{align}
For the optimal solution $(t^\ast,y^\ast,\alpha^\ast,\mu^\ast)$  we have by definition the smallest possible $t:=\| \alpha_{\Lambda}\|_\infty$.
If $t^\ast$ is not smaller than one, then no $y$ exists
with smaller $t$. We therefore declare $\ol{x}$ the unique minimizer if $t^\ast<1$, and if $t^\ast\ge 1$, $\ol{x}$ cannot be the unique
minimizer. Numerically, we would test whether $t^\ast\le 1-\veps$. 
Technically, by applying the above procedure  we risk  rejecting a unique solution $\ol{x}$, 
for which $1-\veps<t^\ast<1$. Recall that the choice for $\veps$ is ad hoc.
\end{remark}

In order to resolve this issue we provide next a theoretically well-founded methodology to deal with the \emph{strict} feasibility problem

\begin{align}\label{eq:dual-certificate-1}
  & D^\top \alpha + \Psi \mu = A^\top y, \\ \label{eq:dual-certificate-2}
  & \alpha_{\Lambda^c} = \sign( \row{D}{\Lambda^c} \ol{x}), \\ \label{eq:dual-certificate-3}
  & \| \alpha_{\Lambda} \|_\infty < 1, \\ \label{eq:dual-certificate-4}
  & \mu > 0.
\end{align}
The above feasibility problem can be recast as a linear system of inequalities, as we will see next.
For this purpose we transform the above problem into the form $Mz=q,Pz < p$. 

\begin{theorem}\label{thm:check-strict-ineq}
Let $M$ be a matrix so that $\mc{N}(M^\top) = \{0\}$.
Then there is a  point $\ol{z}$ with $M\ol{z}=q,P\ol{z} < d$  
if and only if $v=0$ is the only feasible solution of the problem
  \begin{equation}
    q^\top u + p^\top v \le 0,\quad  M^\top u + P^\top v = 0,\quad  v \ge 0. \label{eq:jan_strict-ineq}
\end{equation} 
\end{theorem}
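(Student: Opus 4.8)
The plan is to read Theorem~\ref{thm:check-strict-ineq} as a theorem of the alternative for a mixed system of linear equalities and \emph{strict} linear inequalities, and to establish it through strong linear programming duality. The first step is to turn the strict inequalities $P\ol{z} < d$ into an optimization problem with an attainable value by introducing a scalar margin $s$: a point with $M\ol{z}=q$ and $P\ol{z}<d$ exists if and only if
\[
\gamma := \sup_{z,s}\{\, s : Mz = q,\; Pz + s\eins \le d \,\} > 0,
\]
because for a fixed feasible $z$ the largest admissible $s$ equals $\min_i (d-Pz)_i$, which is positive precisely when $Pz < d$. Here I would use the hypothesis $\mc{N}(M^\top)=\{0\}$, which gives $\mc{R}(M)=\R^m$, so that $Mz=q$ is solvable and the primal LP is always feasible; hence $\gamma>-\infty$.

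The second step is to dualize this LP. Attaching a free multiplier $u$ to $Mz=q$ and a nonnegative multiplier $v\ge 0$ to $Pz+s\eins\le d$, the Lagrangian is linear in $(z,s)$, and the stationarity conditions in $z$ and in $s$ force
\[
M^\top u + P^\top v = 0, \qquad \eins^\top v = 1,
\]
reducing the dual to $\min\{\, q^\top u + d^\top v : M^\top u + P^\top v = 0,\ \eins^\top v = 1,\ v \ge 0 \,\}$. I would then invoke strong LP duality: if the dual is infeasible the primal is unbounded, so $\gamma=+\infty>0$ and no vector with $\eins^\top v=1,\ v\ge 0$ satisfies the dual equality; otherwise the two values coincide, the dual optimum is attained, and $\gamma$ equals the dual minimum.

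The third step converts $\gamma>0$ into the stated certificate. Since the constraints $M^\top u + P^\top v = 0$, $q^\top u + d^\top v \le 0$, $v\ge 0$ are all positively homogeneous, any solution with $v\neq 0$ can be rescaled so that $\eins^\top v = 1$. Therefore ``$\gamma>0$'', i.e.\ the absence of a dual-feasible point with nonpositive objective, is equivalent to the nonexistence of $(u,v)$ with $v\ge 0$, $v\neq 0$, $M^\top u + P^\top v = 0$ and $q^\top u + d^\top v \le 0$. Finally, $\mc{N}(M^\top)=\{0\}$ shows that $v=0$ already forces $u=0$ through $M^\top u = -P^\top v = 0$, so the only surviving feasible solutions are those with $v=0$; taking the contrapositive yields the biconditional exactly as stated.

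The step I expect to be the main obstacle is the careful treatment of strictness together with the degenerate LP branches: I must ensure the margin reformulation faithfully encodes $P\ol{z}<d$, and that the passage through strong duality covers both the unbounded-primal / infeasible-dual case and the attainment of the dual optimum when $\gamma$ is finite. An equivalent and essentially interchangeable route that avoids this bookkeeping is to homogenize the system with an extra variable $\tau>0$ (replacing $z$ by $(z,\tau)$ with $Pz - d\tau < 0$ and $Mz - q\tau = 0$) and apply Motzkin's transposition theorem directly; there the role of $\mc{N}(M^\top)=\{0\}$ is to collapse the nonzero-multiplier condition to $v\neq 0$.
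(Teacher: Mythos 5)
Your proof is correct, but it takes a genuinely different route from the paper's. The paper works with the zero-objective linear program $\max\{0 : Mz=q,\ Pz\le d\}$ and its dual $\min\{q^\top u + d^\top v : M^\top u + P^\top v=0,\ v\ge 0\}$, proving the forward implication by complementary slackness ($(d-P\ol{z})^\top v^\ast=0$ with $P\ol{z}<d$ forces $v^\ast=0$) and the backward implication by invoking the Goldman--Tucker strict complementarity theorem to produce a primal solution $z^\ast$ with $(d-Pz^\ast)+v^\ast>0$. You instead introduce a margin variable $s$, characterize strict feasibility as $\gamma=\sup\{s: Mz=q,\ Pz+s\eins\le d\}>0$, and use only plain strong duality plus positive homogeneity to rescale certificate vectors onto the slice $\eins^\top v=1$. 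Your route buys two things: it avoids strict complementarity entirely (needing only the standard feasible-primal/infeasible-dual-implies-unbounded and attainment facts), and it makes explicit where $\mc{N}(M^\top)=\{0\}$, i.e.\ surjectivity of $M$, guarantees primal feasibility and hence that all degenerate LP branches are covered --- a point the paper's backward direction leaves implicit (if $\{Mz=q,\ Pz\le d\}$ were empty, the dual would be unbounded, ``every dual solution has $v^\ast=0$'' would hold vacuously, and the paper's appeal to strict complementarity would not apply; your formulation sidesteps this). What the paper's argument buys in exchange is brevity, given strict complementarity as a black box, and a dual whose feasible set is literally the certificate system \eqref{eq:jan_strict-ineq} without the normalization $\eins^\top v=1$. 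One cosmetic remark: the $p^\top v$ in \eqref{eq:jan_strict-ineq} is a typo for $d^\top v$, and both you and the paper correctly treat it as such.
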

\begin{proof}
 We consider the following pair of linear programs \\
 \begin{minipage}{0.5\textwidth}
   \begin{align}
   \max \ & 0 \tag{P} \label{eq:jan_primalproblem} \\
   \text{s.t. } & Mz = q \notag \\
   & P z \le d \notag ,
   \end{align}
 \end{minipage}
\begin{minipage}{0.5\textwidth}
  \begin{align}
  \min \ & q^\top u + d^\top v \tag{DP} \label{eq:jan_dualproblem} \\
  \text{s.t. } & M^\top u + P^\top v = 0 \notag \\
  & v \ge 0 \notag
  \end{align}
\end{minipage}
where \eqref{eq:jan_primalproblem} is the primal problem and \eqref{eq:jan_dualproblem} its dual. 

First, note that under the  assumption  $\mc{N}(M^\top) = \{0\}$, vector $v=0$ is the only feasible solution of \eqref{eq:jan_strict-ineq} if and only if
$v=0$ is the only solution of \eqref{eq:jan_dualproblem}. Indeed, the common constraint $M^\top u + P^\top v = 0$ implies due to
the assumptions $v=0$ and $\mc{N}(M^\top) = \{0\}$ that $(u,v)=(0,0)$.

Hence, we can show the statement of the theorem by showing that  \eqref{eq:jan_primalproblem} is strictly feasible if and only if
$v=0$ is the only solution of \eqref{eq:jan_dualproblem}.

Assume on one hand that \eqref{eq:jan_primalproblem} is strictly feasible. Since any feasible solution is also a solution of
\eqref{eq:jan_primalproblem} we can deduce from the existence of a primal solution the existence of a dual solution.
The optimality conditions yield in particular $(d-P\ol{z})^\top v^\ast = 0$ for such a primal solution $\ol{z}$ of \eqref{eq:jan_primalproblem} 
and any dual solution $(u^\ast,v^\ast)$ of \eqref{eq:jan_dualproblem}.
Since $P\ol{z} < d$ holds, it follows that $v^\ast = 0$ is the only solution.

Assume on the other hand that for each dual solution $(u^\ast,v^\ast)$ it holds $v^\ast = 0$. In view of the strict complementarity condition \cite[Thm 10.7]{Vanderbei.2014} it exists a pair of primal and dual solutions such that 
\begin{equation}\label{eq:strict-complementarity}
(d-Pz^\ast) + v^\ast > 0.
\end{equation} 
Since by assumption $v^\ast = 0$, \eqref{eq:strict-complementarity} implies the existence of a primal solution $z^\ast$ with $Pz^\ast < d$.
\smallbreak
This concludes the proof. 
\end{proof}
\begin{remark} 
In view of Thm. ~\ref{thm:check-strict-ineq} we can check feasibility of the system \eqref{eq:dual-certificate-1} -- \eqref{eq:dual-certificate-4}
by linear programming. We first  associate to the  system \eqref{eq:dual-certificate-1} -- \eqref{eq:dual-certificate-4} with strict inequality constraints
the primal linear program
\begin{align*}
    \max_{\alpha,\mu,y} \ & 0 \tag{P$_{<}$} \label{eq:jan_prob3} \\
  \text{s.t. } & \underbrace{\bpm \row{D^\top}{\Lambda} & \Psi & A^\top \epm}_{=:M} \underbrace{ \bpm \alpha \\ \mu \\ y \epm }_{=:z} = \underbrace{-\row{D}{\Lambda^c}^\top\sign (\row{D}{\Lambda^c} \ol{x}) }_{=:q}\notag \\ 
  & \underbrace{ \bpm -I & 0 & 0 \\ I & 0 & 0 \\ 0 & -I & 0  \epm }_{=:P}\underbrace{\bpm \alpha \\ \mu \\ y \epm}_{=:z}  \le  \underbrace{\bpm \eins\\ \eins\\ 0 \epm}_{=:d}, \notag
\end{align*}
as in the Thm. ~\ref{thm:check-strict-ineq}. Assumption $\mc{N} (M^\top) = \{0\}$
in Thm. ~\ref{thm:check-strict-ineq} now corresponds to the first condition from Thm.~\ref{thm:dual_certificate}.  Assuming that it holds,  we can now 
verify   system \eqref{eq:dual-certificate-1} -- \eqref{eq:dual-certificate-4} and strict feasibility in
 \eqref{eq:jan_prob3} and if
\begin{align*}
\max \ & \langle \mb{1},u^+ + u^- + w \rangle \tag{DP$_{<}$} \label{eq:jan_prob4} \\
 \text{s.t. }   & \langle \mb{1},u^+ + u^-  \rangle \le  \langle \sign(\row{D}{\Lambda^c} \ol{x}), \row{D}{\Lambda^c} v \rangle  \\
 & \row{D}{\Lambda} v = u^+ - u^- \\
 & \Psi v = w \\
 & Av = 0 \\
 & u^+,u^-,w \ge 0
\end{align*}
has an optimal objective value equal to zero. 
\end{remark}
%
\subsection{Case Studies: \eqref{def-f1-l1}-\eqref{def-f6-TV-box}}
In the previous section we provided verifiable uniqueness conditions for problem \eqref{eq:f-general-box}.
In this section we specialize these conditions to verifiable uniqueness optimality conditions for our objective 
functions \eqref{def-f1-l1}-\eqref{def-f6-TV-box}. 
\begin{corollary}{(Case $f_1$)} \label{cor:f1-l1}
  A feasible point $\ol{x}$ of problem \eqref{eq:minf-exact-constr} with the objective function defined in \eqref{def-f1-l1} is the unique solution if and only if 
  \begin{center}
  $\begin{array}{c}
    \exists y \in \mb{R}^m:\col{A}{S}^\top y = \sign(\ol{x}_S) \wedge \|\col{A}{S^c}^\top y\|_\infty < 1 \text{ and} \\
    \col{A}{S} \text{ is injective,}
  \end{array}$
  \end{center}
  where $S:=\supp (\ol{x})$ and $\col{A}{S}$ selects the columns indexed by $S$.
  \end{corollary}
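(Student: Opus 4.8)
The plan is to obtain the corollary as a direct specialization of Thm.~\ref{thm:dual_certificate}, observing that $f_1$ is recovered from the general objective \eqref{eq:f-general-box} by setting $D := I_n$ and $l_i := -\infty$, $u_i := +\infty$ for all $i\in[n]$, which removes the box constraint. First I would record the consequences of this choice for the quantities appearing in Thm.~\ref{thm:dual_certificate}. Since no component $\ol{x}_i$ attains a finite bound, we have $S_l = S_u = \emptyset$, so the diagonal matrix $\Psi$ vanishes; in particular the term $\Psi\mu$ and the constraint $\mu > 0$ in condition (ii) become void, and $\mc{N}(\Psi) = \R^n$. Moreover $D = I_n$ gives $D\ol{x} = \ol{x}$, so $\Lambda = \cosupp(D\ol{x}) = \cosupp(\ol{x}) = S^c$ and hence $\Lambda^c = S = \supp(\ol{x})$.

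Next I would specialize condition (i). With $\mc{N}(\Psi) = \R^n$ it reduces to $\mc{N}(A) \cap \mc{N}(\row{D}{S^c}) = \{0\}$, where $D = I_n$. The row-selected identity $\row{D}{S^c}$ sends $x$ to $x_{S^c}$, so its nullspace is the set of vectors supported on $S$. Thus (i) states that the only $x$ with $\supp(x) \subseteq S$ and $Ax = 0$ is $x = 0$; since $Ax = \col{A}{S} x_S$ for such $x$, this is exactly the injectivity of $\col{A}{S}$, which I would record as the second condition of the corollary.

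Then I would specialize condition (ii). With $D = I_n$ and $\Psi = 0$, the membership $D^\top\alpha + \Psi\mu \in \mc{R}(A^\top)$ becomes $\alpha \in \mc{R}(A^\top)$, i.e.\ $\alpha = A^\top y$ for some $y \in \R^m$. Reading this identity on the two index blocks and using $\Lambda^c = S$, $\Lambda = S^c$, the sign constraint $\alpha_{\Lambda^c} = \sign(\row{D}{\Lambda^c}\ol{x})$ becomes $(A^\top y)_S = \col{A}{S}^\top y = \sign(\ol{x}_S)$, while $\|\alpha_\Lambda\|_\infty < 1$ becomes $\|\col{A}{S^c}^\top y\|_\infty < 1$. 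This reproduces the first condition of the corollary and completes the specialization.

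Since every step is a substitution into the already-established equivalence of Thm.~\ref{thm:dual_certificate}, there is no genuine analytic obstacle; the only point requiring care is the index-set bookkeeping together with the observation that $\Psi$ degenerates to the zero matrix in the unconstrained case, so that both the $\Psi\mu$ term and the positivity constraint on $\mu$ disappear. I would finally verify that dropping these trivial constraints in (ii) does not strengthen the condition—which it does not, since a void constraint is automatically satisfiable—so that the resulting equivalence remains faithful in both directions.
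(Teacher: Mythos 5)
Your proposal is correct and follows essentially the same route as the paper's own proof: specialize Thm.~\ref{thm:dual_certificate} with $D=I_n$ and $\Psi=0$ (since $\ol{x}$ is interior to $[l,u]=\R^n$), identify $\Lambda=S^c$, reduce condition (i) to $\mc{N}(\col{A}{S})=\{0\}$, and rewrite condition (ii) via $\alpha=A^\top y$ read blockwise on $S$ and $S^c$. The only cosmetic difference is that you explicitly justify discarding the void $\mu>0$ constraint, which the paper leaves implicit.
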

  \begin{proof}
  We derive the above claim from Thm. ~\ref{thm:dual_certificate}. In view of the definition in  \eqref{def-f1-l1}
  we set $D = I$ the identity matrix and $\Psi = 0$ the zero matrix. The latter is due to the fact that $\R^n$ can be seen as
  $\delta_{[l,u]}(\ol{x})$ with $l_i=-\infty$, $u_i=\infty$, for all $i\in{[n]}$. Hence $\ol{x}$ always lies in the interior of $\R^n$ and 
  $N_{\R^n}(\ol{x})=0$, compare \eqref{eq:def-normal-cone-box}.
  These choices specialize the conditions of Thm. \ref{thm:dual_certificate} to the following conditions
    \begin{itemize}
      \item[(i)] $\mc{N}(A) \cap \mc{N}(\row{I}{S^c}) = \{0\}$,
      \item[(ii)] $\exists \alpha \in \mb{R}^n : \alpha \in \mc{R}(A^\top), \alpha_S = \sign(\ol{x}_S), \|\alpha_{S^c}\|_\infty < 1$,
    \end{itemize}
    taking into account also that $\Lambda = S^c$ and $\mc{N}(\Psi) = \mb{R}^n$. 
    Condition (i) can be further simplified to
    
    \begin{align}
      \mc{N}(A) \cap \mc{N}(\row{I}{S^c}) =\mc{N}(A) \cap \mc{N}(\col{I}{S^c}) = \mc{N}(A) \cap \{ x \colon x_{S^c} = 0 \} =  \mc{N}(\col{A}{S}) = \{ 0 \},
    \end{align}
    which yields the condition that $\col{A}{S}$ has to be injective. We conclude the proof by transforming the condition (ii) to
    
    \begin{align}
     & \exists \alpha \in \mb{R}^n,y \in \mb{R}^m : \alpha = A^\top y \wedge \alpha_S = \sign(\ol{x}_S) \wedge \|\alpha_{S^c}\|_\infty < 1 \\
   \Leftrightarrow \  & \exists y \in \mb{R}^m   :  \col{A}{S}^\top y = \sign(\ol{x}_S) \wedge \|\col{A}{S^c}^\top y \|_\infty < 1.
    \end{align}
  \end{proof}

\begin{corollary}{(Case $f_2$)} \label{cor:f2-l1-nonneg}
  A feasible point $\ol{x}$ of problem \eqref{eq:minf-exact-constr} with the objective function defined in \eqref{def-f2-l1-nonneg}
 is the unique solution if and only if 
  \begin{center}
  $\begin{array}{c}
    \exists y \in \mb{R}^m:\col{A}{S}^\top y = \eins_{S} \wedge \col{A}{S^c}^\top y < \eins_{S^c}\text{ and} \\
    \col{A}{S} \text{ is injective,}
  \end{array}$
  \end{center}
  with $S:=\supp (\ol{x})$.
  \end{corollary}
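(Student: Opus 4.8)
The plan is to derive this corollary as a direct specialization of Theorem~\ref{thm:dual_certificate}, parallel to the treatment of $f_1$ in Corollary~\ref{cor:f1-l1}, the only new feature being a nontrivial matrix $\Psi$ coming from the nonnegativity constraint. First I would record the relevant data: $f_2$ corresponds to \eqref{eq:f-general-box} with $D=I_n$, $l=0\in\R^n$ and $u_i=+\infty$ for all $i$, so that $\Lambda=\cosupp(D\ol{x})=\cosupp(\ol{x})=S^c$. Reading off the definition of $\Psi$ (equivalently \eqref{normal-come-orthant}, where $S_l=S^c$, $S_{lu}=S$, $S_u=\emptyset$), we get $\Psi_{ii}=-1$ for $i\in S^c$ and $\Psi_{ii}=0$ for $i\in S$, i.e. $\Psi=-\diag(\eins_{S^c})$.

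Next I would specialize condition (i). Both $\mc{N}(\row{I}{S^c})$ and $\mc{N}(\Psi)$ equal the subspace $\{x\colon x_{S^c}=0\}$, so their intersection with $\mc{N}(A)$ collapses to a single copy of that subspace, and the same computation as in Corollary~\ref{cor:f1-l1} gives $\mc{N}(A)\cap\{x\colon x_{S^c}=0\}=\mc{N}(\col{A}{S})$. Hence (i) is equivalent to injectivity of $\col{A}{S}$.

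The substantive part is condition (ii). With $D=I_n$ we have $D^\top\alpha=\alpha$, and $\alpha_{\Lambda^c}=\alpha_S=\sign(\ol{x}_S)=\eins_S$ since $\ol{x}_S>0$. Writing $\alpha+\Psi\mu\in\mc{R}(A^\top)$ as $\alpha+\Psi\mu=A^\top y$ and splitting into the $S$- and $S^c$-blocks, the $S$-block reads $\col{A}{S}^\top y=\eins_S$ (because $\Psi$ and hence $\mu_S$ play no role there, $\mu_S>0$ being freely arrangeable), while the $S^c$-block reads $\col{A}{S^c}^\top y=\alpha_{S^c}-\mu_{S^c}$. I would then eliminate the auxiliary variables $\alpha_{S^c},\mu_{S^c}$ by showing that, for a fixed $y$, such $\alpha_{S^c}$ with $\|\alpha_{S^c}\|_\infty<1$ and $\mu_{S^c}>0$ exist if and only if $\col{A}{S^c}^\top y<\eins_{S^c}$ componentwise. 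The forward direction is immediate: $\col{A}{S^c}^\top y=\alpha_{S^c}-\mu_{S^c}<\alpha_{S^c}<\eins_{S^c}$. For the converse, writing $w:=\col{A}{S^c}^\top y<\eins_{S^c}$, each entry admits a choice $\alpha_{S^c,i}\in(\max(w_i,-1),1)$, a nonempty interval precisely because $w_i<1$, and then $\mu_{S^c}:=\alpha_{S^c}-w>0$ closes the argument.

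The only genuine obstacle is this elimination step in (ii): one must see that the strict box constraint on $\alpha_{S^c}$ together with the strict positivity of $\mu$ degenerates into the single one-sided strict inequality $\col{A}{S^c}^\top y<\eins_{S^c}$, and in particular that the lower bound $\alpha_{S^c,i}>-1$ never interferes (being automatically compatible, since $-1<1$). Everything else is a transcription of Theorem~\ref{thm:dual_certificate} into the present notation, and assembling the two specialized conditions yields exactly the stated characterization.
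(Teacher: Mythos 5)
Your proposal is correct and follows essentially the same route as the paper: specialize Theorem~\ref{thm:dual_certificate} with $D=I$ and $\Psi=-\diag(\eins_{S^c})$, collapse condition (i) to injectivity of $\col{A}{S}$ via $\mc{N}(\row{I}{S^c})=\mc{N}(\Psi)$, and reduce (ii) to the one-sided strict inequality on $S^c$. Your componentwise elimination of $(\alpha_{S^c},\mu_{S^c})$ is just an explicit verification of the set identity $\{\alpha_{S^c}\colon\|\alpha_{S^c}\|_\infty<1\}+\R^{|S^c|}_{--}=\eins_{S^c}+\R^{|S^c|}_{--}$ that the paper invokes directly.
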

  \begin{proof}  As in the proof of Cor. ~\ref{cor:f1-l1} we apply Thm. ~\ref{thm:dual_certificate} with $D=I$ and
  $\Psi_{ii}=0$, for $i\in S$ and $\Psi_{ii}=-1$, for $i\in S^c$, since $N_{R^n_+}(\ol{x})=\{0\}^{|S|}\times \R_-^{|S^c|}$.
  Conditions (i) and (ii) in Thm. \ref{thm:dual_certificate} become
    \begin{itemize}
      \item[(i)] $\mc{N}(A) \cap \mc{N}(\row{I}{S^c}) \cap \mc{N}(\Psi) = \{0\}$,
      \item[(ii)] $\exists \alpha \in \mb{R}^n,\exists \mu \in \mb{R}^n :$ \\
      $ \alpha +\Psi \mu \in \mc{R}(A^\top), \alpha_S = \sign(\ol{x}_S), \|\alpha_{S^c}\|_\infty < 1, \mu >0$.  
   \end{itemize}
   Since $\mc{N}(\row{I}{S^c}) = \mc{N}(\Psi) $ condition (i) above simplifies as in the proof of Cor. ~\ref{cor:f1-l1} to
    condition: $\col{A}{S}$ is injective.
    
    Since $\{\Psi \mu \colon \mu \in\R^n_{++}\}=\Psi R^n_{++} = \{0\}^{|S|}\times \R^{|S^c|}_{--}$ and
    \begin{equation*}
    \{\alpha_{S^c} \colon  \|\alpha_{S^c} \|_\infty < 1\} + \R^{|S^c|}_{--} = \eins_{S^{c}} + \R^{|S^c|}_{--},
    \end{equation*}
    condition (ii) above is equivalent to the existence of a vector ${y}\in  {\R}^{m}$ such that 
\begin{equation*}
A^{\top} {y}\in  \eins + \{0\}^{|S|}\times \R^{|S^c|}_{--} ,
\end{equation*}
that can be written as
\begin{equation}
\col{A}{S}^\top y =\eins_S, \quad \col{A}{S^{c}}^\top y <\eins_{S^{c}}.
\end{equation}
This completes the proof.
  \end{proof}
\begin{corollary}{(Case $f_3$)} \label{cor:f3-l1-box}
 A binary feasible point $\ol{x}\in\{0,1\}^n$ of problem \eqref{eq:minf-exact-constr} with the objective function defined in  \eqref{def-f3-l1-box} is the unique solution if and only if
 
  \begin{align}
    \exists y \in \mb{R}^m \ : \ \col{A}{S}^\top y > \mb{1} \wedge  \col{A}{S^c}^\top y < \mb{1},
  \end{align}
  with $S := \supp (\ol{x})$.
  \begin{proof}
  As above we specialize Thm. ~\ref{thm:dual_certificate} with $D=I$ and 
  $\Psi_{ii}=-1$, for $i\in S^c$ and $\Psi_{ii}=1$, for $i\in S$, since $N_{[0,1]^n}(\ol{x})=\R^{|S|}_{+}\times R_{-}^{|S^c|}$ for
  $\ol{x}\in\{0,1\}^n$.
  Hence $\mc{N}(\Psi) = \{0\}$. As a consequence condition (i) Thm. ~\ref{thm:dual_certificate}  is automatically fulfilled. 
  We now focus on condition (ii) that now reads
  
    \begin{align} \label{eq:cor_f3-l1-box}
      \begin{pmatrix}
        \alpha_S \\
        \alpha_{S^c}
      \end{pmatrix} +
      \begin{pmatrix}
        \mu^+ \\
        \mu^-
      \end{pmatrix}
      \in \mc{R}\left( \begin{pmatrix} \col{A}{S}^\top \\ \col{A}{S^c}^\top \end{pmatrix} \right)   , \text{ with } \mu^- < 0, \mu^+ > 0, \alpha_{S}=\mb{1}, \|\alpha_{S^c} \|_\infty < 1,
    \end{align}
    since for  $\ol{x} \in \{0,1\}^n$ we have $\alpha_{S} = \sign(\ol{x}_S) = \mb{1}$. In view of
    $$
    \{\alpha_{S^c} \colon  \|\alpha_{S^c} \|_\infty < 1\} + \R^{|S^c|}_{--} = \eins_{S^{c}} + \R^{|S^c|}_{--},
    $$
    condition \eqref{eq:cor_f3-l1-box} 
    is equivalent to the existence of a vector ${y}\in  {\R}^{m}$ such that 
$$A^{\top} {y}\in  \eins + \R^{|S|}_{++}\times \R^{|S^c|}_{--} ,$$ 
that can be written as
\begin{equation}
\col{A}{S}^\top y > \eins_S, \quad \col{A}{S^{c}}^\top y <\eins_{S^{c}}.
\end{equation}
\end{proof}
\end{corollary}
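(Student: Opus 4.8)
The plan is to specialize Theorem~\ref{thm:dual_certificate} to the box $[0,1]^n$ with $D=I$, and then simplify conditions (i) and (ii) using the binary structure $\ol{x}\in\{0,1\}^n$. First I would compute the diagonal matrix $\Psi$. For a binary feasible point every coordinate sits at a box endpoint: $\ol{x}_i=1$ (so $i\in S$) forces $\Psi_{ii}=1$ by the $u_i$-case, and $\ol{x}_i=0$ (so $i\in S^c$) forces $\Psi_{ii}=-1$ by the $l_i$-case. Thus $\Psi$ has all diagonal entries equal to $\pm 1$, hence is invertible, so $\mc{N}(\Psi)=\{0\}$. This is the crucial observation: the box constraint, active at every coordinate, makes condition~(i) $\mc{N}(A)\cap\mc{N}(\row{I}{\Lambda})\cap\mc{N}(\Psi)=\{0\}$ hold automatically, regardless of $A$. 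This is exactly what removes the injectivity requirement that appeared in Corollaries~\ref{cor:f1-l1} and~\ref{cor:f2-l1-nonneg}, and it is the conceptually interesting point of this case.

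Next I would reduce condition~(ii). With $D=I$ the cosupport $\Lambda=\cosupp(\ol{x})=S^c$, so the sign constraint $\alpha_{\Lambda^c}=\sign(\row{D}{\Lambda^c}\ol{x})$ becomes $\alpha_S=\sign(\ol{x}_S)=\eins_S$, and the strict bound applies to $\alpha_{S^c}$. Writing $\Psi\mu$ blockwise, the $S$-block contributes $\mu^+:=\mu_S>0$ and the $S^c$-block contributes $-\mu_{S^c}=:\mu^-<0$, which gives the membership statement~\eqref{eq:cor_f3-l1-box}. The final step is the Minkowski-sum simplification: on the $S$-block the constraint is $\eins_S+\{\text{positive}\}$, i.e. strictly greater than $\eins_S$; on the $S^c$-block I would invoke the identity $\{\alpha_{S^c}:\|\alpha_{S^c}\|_\infty<1\}+\R^{|S^c|}_{--}=\eins_{S^c}+\R^{|S^c|}_{--}$, already used in the proof of Corollary~\ref{cor:f2-l1-nonneg}, to collapse the $\alpha_{S^c}$ freedom and the negative shift into a single strict upper bound. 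Together these say $A^\top y\in\eins+\R^{|S|}_{++}\times\R^{|S^c|}_{--}$, equivalently $\col{A}{S}^\top y>\eins_S$ and $\col{A}{S^c}^\top y<\eins_{S^c}$.

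I do not expect a genuine obstacle here, since the heavy lifting is done by Theorem~\ref{thm:dual_certificate} and the argument parallels Corollary~\ref{cor:f2-l1-nonneg} closely. The one place that warrants care is justifying that condition~(i) is vacuous: one must be sure $\Psi$ is full rank, which relies precisely on $\ol{x}$ being \emph{binary} so that no coordinate lands in the open interval $(0,1)$ where $\Psi_{ii}=0$ would occur. Were $\ol{x}$ merely feasible in $[0,1]^n$ rather than in $\{0,1\}^n$, the argument would fail and an injectivity-type condition on a submatrix of $A$ would reappear; flagging this dependence on binarity is the main subtlety to state cleanly.
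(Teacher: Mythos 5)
Your proposal is correct and follows essentially the same route as the paper: specialize Theorem~\ref{thm:dual_certificate} with $D=I$, observe that binarity makes $\Psi$ invertible so that condition (i) is vacuous, and collapse condition (ii) via the Minkowski-sum identity $\{\alpha_{S^c}:\|\alpha_{S^c}\|_\infty<1\}+\R^{|S^c|}_{--}=\eins_{S^c}+\R^{|S^c|}_{--}$ to obtain $\col{A}{S}^\top y>\eins_S$ and $\col{A}{S^c}^\top y<\eins_{S^c}$. Your closing remark on why the argument hinges on $\ol{x}$ being binary (no coordinate in the open interval $(0,1)$) is a useful clarification consistent with the paper's statement.
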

\begin{corollary}{(Case $f_4$)} \label{cor:f4-TV}
 A feasible point $\ol{x}\in\R^n$ of problem \eqref{eq:minf-exact-constr} with the objective function defined in  \eqref{def-f4-TV} 
 is the unique solution if and only if 
 \begin{itemize}
    \item[(i)] $\mc{N}(A) \cap \mc{N}(\row{D}{\Lambda}) = \{0\},$
    \item[(ii)] $\exists \alpha : D^\top\alpha \in \mc{R}(A^\top ), \alpha_{\Lambda^c} = \sign(\row{D}{\Lambda^c}\ol{x}), \|\alpha_{\Lambda} \|_\infty < 1,$
  \end{itemize}
  where $\Lambda = \cosupp(D \ol{x})$.
  \end{corollary}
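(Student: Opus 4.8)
The plan is to derive this corollary as a direct specialization of Theorem~\ref{thm:dual_certificate} to the case $f = f_4$, that is, problem \eqref{eq:f-general-box} with no active box constraints and a general matrix $D$. First I would argue, exactly as in the proof of Cor.~\ref{cor:f1-l1}, that the absence of box constraints corresponds to reading $\R^n$ as $[l,u]^n$ with $l_i = -\infty$ and $u_i = +\infty$ for every $i\in[n]$. Then every feasible $\ol{x}$ lies in the interior of $[l,u]^n = \R^n$, so in the notation of \eqref{eq:def-normal-cone-box} we have $S_l = S_u = \emptyset$ and $S_{lu} = [n]$, whence $N_{\R^n}(\ol{x}) = \{0\}$. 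By the definition of $\Psi$ in Theorem~\ref{thm:dual_certificate} this forces $\Psi = 0$, the $n\times n$ zero matrix. Unlike the $f_1$ case, here $D$ is left arbitrary, so no support/sign simplification of $D$ is performed.

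With $\Psi = 0$ fixed, I would simplify the two conditions of Theorem~\ref{thm:dual_certificate} separately. For condition (i), since $\mc{N}(\Psi) = \mc{N}(0) = \R^n$, the triple intersection $\mc{N}(A) \cap \mc{N}(\row{D}{\Lambda}) \cap \mc{N}(\Psi)$ collapses to $\mc{N}(A) \cap \mc{N}(\row{D}{\Lambda})$, yielding (i) in the stated form, with $\Lambda = \cosupp(D\ol{x})$ inherited unchanged. For condition (ii), the term $\Psi\mu$ vanishes identically for every $\mu\in\R^n$, so the membership $D^\top\alpha + \Psi\mu \in \mc{R}(A^\top)$ reduces to $D^\top\alpha \in \mc{R}(A^\top)$; the remaining requirements $\alpha_{\Lambda^c} = \sign(\row{D}{\Lambda^c}\ol{x})$ and $\|\alpha_\Lambda\|_\infty < 1$ carry over verbatim.

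The only point that needs a word of care is the legitimacy of discarding the slack variable $\mu$ and its constraint $\mu > 0$. Since $\Psi = 0$ annihilates $\mu$, the existential quantifier over $(\alpha,\mu)$ in Theorem~\ref{thm:dual_certificate}(ii) reduces to an existential over $\alpha$ alone: the constraint $\mu > 0$ is then satisfiable independently of everything else (take $\mu = \eins$), so it imposes no restriction and may simply be dropped. Hence (ii) of Theorem~\ref{thm:dual_certificate} is equivalent to (ii) as stated here, and the corollary follows. There is no substantive obstacle in this argument; it is a clean specialization, and the main thing to get right is the bookkeeping that $\Psi = 0$ sends $\mc{N}(\Psi)$ to all of $\R^n$ in (i) while making $\mu$ inert in (ii).
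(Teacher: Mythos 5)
Your proposal is correct and follows essentially the same route as the paper: the paper's proof likewise notes that, as in the $f_1$ case, the absence of box constraints forces $\Psi=0$, whereupon the two conditions of Thm.~\ref{thm:dual_certificate} reduce to (i) and (ii) with the variable $\mu$ dropped. Your write-up merely spells out the bookkeeping ($\mc{N}(\Psi)=\R^n$ in (i), $\Psi\mu\equiv 0$ in (ii)) that the paper leaves implicit.
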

  \begin{proof} As in the proof of Cor. \ref{cor:f1-l1} we conclude that $\Psi = 0$ is the zero matrix. 
  Hence, the two conditions of  Thm. ~\ref{thm:dual_certificate}  simplify to the two conditions above, that do not involve the variable $\mu$.
  \end{proof}

\begin{corollary}{(Case $f_5$)} \label{cor:f5-TV-nonneg}
A feasible point $\ol{x}\in\R^n_+$ of problem \eqref{eq:minf-exact-constr} with the objective function defined in \eqref{def-f5-TV-nonneg} 
is the unique solution if and only if 
\begin{itemize}
    \item[(i)] $\mc{N}(A) \cap \mc{N}(\row{D}{\Lambda}) \cap \mc{N}( \Psi ) =  \{0\},$
    \item[(ii)] $\exists \alpha,  \exists \mu: D^\top\alpha + \Psi \mu \in \mc{R}(A^\top ), \alpha = \sign(\row{D}{\Lambda^c}\ol{x}), \|\alpha_{\Lambda} \|_\infty < 1,\mu > 0$
  \end{itemize}
  where $\Lambda = \cosupp(D \ol{x})$ and
  
  \begin{align}
    \Psi_{ii} = \begin{cases}
      -1, &  \ol{x}_i = 0, \\
      0, & \text{otherwise.}
    \end{cases}
  \end{align}
  \end{corollary}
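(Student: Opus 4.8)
The plan is to obtain the claim as a direct specialization of the Dual Certificate Theorem~\ref{thm:dual_certificate}, exactly in the spirit of the preceding case studies. First I would identify $f_5$ as an instance of the general box-constrained objective~\eqref{eq:f-general-box}: keeping $D$ as the arbitrary matrix of the statement, choosing the lower bound $l := 0 \in \R^n$ and the upper bounds $u_i := +\infty$ for all $i \in [n]$ turns the box $[l,u]$ into the nonnegative orthant $\R^n_+$, so that $\delta_{[l,u]}(x) = \delta_{\R^n_+}(x)$ and $f_5$ coincides with the objective in~\eqref{eq:f-general-box}. Consequently the subdifferential decomposition and the two conditions $(i)$, $(ii)$ of Theorem~\ref{thm:dual_certificate} apply once I have computed the diagonal matrix $\Psi$ attached to this particular box.

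The only step requiring care is determining $\Psi$. Following the index-set definitions preceding Theorem~\ref{thm:dual_certificate}, I would classify each coordinate by the active side of the box at the feasible point $\ol{x} \in \R^n_+$. Since $l_i = 0$, the lower-active set is $S_l = \{i \in [n] : \ol{x}_i = 0\} = \cosupp(\ol{x})$; since $u_i = +\infty$ while $\ol{x}$ is finite, the upper-active set is empty, $S_u = \emptyset$; and the interior set is $S_{lu} = \{i : \ol{x}_i \in (0,\infty)\} = \supp(\ol{x})$. Reading off from Theorem~\ref{thm:dual_certificate} the prescribed entries $\Psi_{ii} = -1$ on $S_l$, $\Psi_{ii} = +1$ on $S_u$, and $\Psi_{ii} = 0$ on $S_{lu}$, the $+1$ case never occurs, and we are left with exactly the two-case matrix stated in the corollary: $\Psi_{ii} = -1$ when $\ol{x}_i = 0$ and $\Psi_{ii} = 0$ otherwise. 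As a consistency check one recovers $N_{\R^n_+}(\ol{x}) = \R^{|S_l|}_- \times \{0\}^{|S_{lu}|} = \{\Psi \mu : \mu \ge 0\}$, in agreement with~\eqref{normal-come-orthant}.

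With $\Psi$ in hand, I would substitute it into conditions $(i)$ and $(ii)$ of Theorem~\ref{thm:dual_certificate}, which reproduces the two conditions of the corollary with $\Lambda = \cosupp(D\ol{x})$. I expect no genuine obstacle at this stage: in contrast to Corollary~\ref{cor:f2-l1-nonneg}, where the choice $D = I$ forced $\mc{N}(\row{I}{S^c}) = \mc{N}(\Psi)$ and collapsed the triple nullspace intersection in $(i)$ to the single injectivity requirement on $\col{A}{S}$, here $D$ is an arbitrary matrix and $\row{D}{\Lambda}$ bears no such relation to $\Psi$. Hence the three-fold intersection in $(i)$ and the full certificate condition $(ii)$ involving both $\alpha$ and $\mu$ do not simplify and must be retained in the form inherited directly from the theorem. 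The main point to get right is therefore the bookkeeping of the infinite upper bound, ensuring that only the lower constraints $\ol{x}_i = 0$ contribute to $\Psi$.
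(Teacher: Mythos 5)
Your proposal is correct and follows exactly the paper's route: the paper's own proof is the one-liner ``Immediate from Thm.~\ref{thm:dual_certificate} and the definition of $f_5$ and $\Psi$,'' and your specialization $l=0$, $u_i=+\infty$, with $S_l=\cosupp(\ol{x})$, $S_u=\emptyset$, is precisely the bookkeeping that makes that immediacy explicit. Your observation that, unlike Corollary~\ref{cor:f2-l1-nonneg}, no collapse of the triple nullspace intersection occurs because $D$ is arbitrary is also consistent with the paper retaining conditions $(i)$ and $(ii)$ in full.
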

  \begin{proof} 
  Immediate from Thm. ~\ref{thm:dual_certificate} and the definition of $f_5$ in \eqref{def-f5-TV-nonneg}  and $\Psi$.
  \end{proof}

\begin{corollary}{(Case $f_6$)} \label{cor:f6-TV-box}
A binary feasible point $\ol{x}\in\{0,1\}^n$ of problem \eqref{eq:minf-exact-constr} with the objective function defined in \eqref{def-f6-TV-box} 
is the unique solution if and only if 
\begin{itemize}
    \item[(i)] $\mc{N}(A) \cap \mc{N}(\row{D}{\Lambda}) \cap \mc{N}( \Psi ) =  \{0\},$
    \item[(ii)] $\exists \alpha,  \exists \mu: D^\top\alpha + \Psi \mu \in \mc{R}(A^\top ), \alpha = \sign(\row{D}{\Lambda^c}\ol{x}), \|\alpha_{\Lambda} \|_\infty < 1,\mu > 0$
  \end{itemize}
  where $\Lambda = \cosupp(D \ol{x})$ and
  
  \begin{align}
    \Psi_{ii} = \begin{cases}
      -1, &  \ol{x}_i = 0, \\
       1, &  \ol{x}_i = 1.
    \end{cases}
  \end{align}
  \end{corollary}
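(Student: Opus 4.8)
The plan is to obtain the claim as a direct specialization of the Dual Certificate theorem (Thm.~\ref{thm:dual_certificate}) to the choice $f = f_6$. First I would identify $f_6$ with the general objective in \eqref{eq:f-general-box}: since $f_6(x) = \|Dx\|_1 + \delta_{[0,1]^n}(x)$ and the feasibility set is $\mc{X} = \{x : Ax = A\ol{x}\}$, this is exactly \eqref{eq:f-general-box} with the box $[l,u]$ taken to be $[0,1]^n$, i.e.~$l_i = 0$ and $u_i = 1$ for every $i \in [n]$. The matrix $D$ is left arbitrary, to be specialized to the discrete gradient only later.

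The only genuine computation is to evaluate the diagonal matrix $\Psi = \Psi(\ol{x})$ from Thm.~\ref{thm:dual_certificate} at the binary point $\ol{x} \in \{0,1\}^n$. Using the general rule $\Psi_{ii} = -1$ when $\ol{x}_i = l_i$, $\Psi_{ii} = 1$ when $\ol{x}_i = u_i$, and $\Psi_{ii} = 0$ otherwise, I would observe that for a binary $\ol{x}$ each coordinate satisfies either $\ol{x}_i = 0 = l_i$ or $\ol{x}_i = 1 = u_i$. Hence the ``otherwise'' case never occurs, equivalently the interior index set $S_{lu} = \{i : \ol{x}_i \in (l_i,u_i)\}$ is empty, and $\Psi$ reduces to the diagonal matrix with $\Psi_{ii} = -1$ on $\cosupp(\ol{x})$ and $\Psi_{ii} = 1$ on $S := \supp(\ol{x})$, which is precisely the $\Psi$ asserted in the corollary.

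With this $\Psi$ in hand, conditions (i) and (ii) of Thm.~\ref{thm:dual_certificate} transcribe verbatim into the two itemized conditions of the corollary, with $\Lambda = \cosupp(D\ol{x})$. I would additionally remark, in the spirit of the proof of Cor.~\ref{cor:f3-l1-box}, that every diagonal entry of $\Psi$ is $\pm 1$, so $\Psi$ is nonsingular and $\mc{N}(\Psi) = \{0\}$; consequently condition (i) is automatically satisfied and the effective certificate is carried entirely by (ii). I do not expect a real obstacle here, since the statement is literally an instance of Thm.~\ref{thm:dual_certificate}; the only point requiring care is to keep the two index sets distinct, namely $\Lambda = \cosupp(D\ol{x})$ governing the $\ell_1$-part through $\row{D}{\Lambda}$ versus $S = \supp(\ol{x})$ governing the structure of $\Psi$ through the active box faces, and to note that it is exactly the binary assumption that forces every box constraint to be active.
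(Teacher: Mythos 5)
Your proposal is correct and follows essentially the same route as the paper: both specialize Thm.~\ref{thm:dual_certificate} with $l=0$, $u=\eins$, observe that the binary assumption makes every box constraint active so that $\Psi$ is diagonal with entries $\pm 1$, hence $\mc{N}(\Psi)=\{0\}$ and condition (i) holds automatically, while condition (ii) carries over unchanged. Your write-up is merely more explicit than the paper's two-line argument (e.g.\ in spelling out that $S_{lu}=\emptyset$), with no substantive difference.
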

  \begin{proof} 
  In view of the definition of $f_6$ in \eqref{def-f6-TV-box}  and $\Psi$ above
   $\mc{N}(\Psi) = \{0\}$. As a consequence condition (i) Thm. ~\ref{thm:dual_certificate}  is automatically fulfilled. 
   Condition (ii) from Thm. ~\ref{thm:dual_certificate} is kept unchanged.
  \end{proof}

\section{Probabilistic Uniqueness}\label{sec:prob-unique}

In this section we are concerned with probabilistic solution uniqueness 
when minimizing some structure enforcing regularizer $f:\R^n\to\ol{\R}$
via \eqref{eq:minf-exact-constr} in the noiseless setting of \eqref{eq:Ax=b}.
In particular we address the question of predicting uniqueness with high probability
for a specific number of random linear measurements,
when knowing \emph{only} the solution co-/sparsity.

For $f=\|\cdot\|_1$ and  a Gaussian matrix $A$ it is well-known that the recovery of
a sparse vector $x$ depends only on its sparsity level, and is independent of the locations or values of the
nonzero entries. There are precise relations between the sparsity $s$, ambient dimension $n$, and
the number of samples $m$ that guarantees success of \eqref{eq:minf-exact-constr}.
Moreover, several authors have shown that there is a transition from absolute success to absolute failure, and they 
have \emph{accurately} characterized the location of the threshold, also called
\emph{phase transition}, in the $\ell_1$-case and other \emph{norms}, e.g. nuclear norm.
For the $\TV$-seminorm such a complete analysis is still missing.
As mentioned in the introduction, the authors in  \cite{Zhang-1DTV} have recently shown 
that in the case of 1D TV-minimization, case \eqref{def-f4-TV},
the phase transition can be accurately described by an effective bound
for the statistical dimension of a descent cone, which is based on the squared distance of a standard normal vector to the 
subdifferential of the objective function at the sought solution $\ol{x}$,

\begin{equation*}
\min_{\tau \ge 0} \EE [\dist(X,\tau \partial f(\ol{x})].
\end{equation*}

We will explore next if the same relation that describes phase transitions for $\ell_1$-minimization and  1D $\TV$-minimization
also hold for all our objective functions in \eqref{def-f1-l1}-\eqref{def-f6-TV-box}.

\subsection{Phase Transitions in Random Linear Inverse Problems} 

We collect here some recent results of convex signal reconstruction
with a Gaussian sampling model and briefly explain how classical results for Gaussian processes lead to a sharp bound for the
number of Gaussian measurements that suffice for exact recovery.

\begin{definition}{(Statistical dimension, \cite{Amelunxen2014living})}
 The \emph{statistical dimension} $\delta(K)$ of a closed, convex cone $K\subset\R^n$ is the quantity
\begin{equation}\label{eq:def-stat-dim}
\delta(K)=\EE[\Vert\Pi_K(X)\Vert_2^2],
\end{equation}
where $\Pi_K$ is the Euclidean projection onto $K$ and $X$ is a standard normal vector, i.e. $X\sim\mc{N}(0,I_n)$.
 \end{definition}

\begin{definition}{(Descent cone)}
The \emph{descent cone} of a proper convex function $f:\R^n\to\ol{\R}$ at a point $x$ is
\begin{equation}\label{eq:def-descent-cone}
\mc{D}_f(\ol{x}) := \cone \{ z-\ol{x} \colon f(z)\leq f(\ol{x})\}, 
\end{equation}
i.e. the conic hull of the directions that do not increase $f$ near $\ol{x}$.
 \end{definition}

\begin{definition}{(Gaussian width, \cite{Chandrasekaran2012convex})}
The \emph{Gaussian width} of $C\subset \R^n$ is

	\begin{equation*}
		\omega (C) = \EE [\sup_{z\in C} \la X,z \ra ],
	\end{equation*}
	where $X$ is a standard normal vector, i.e. $X\sim\mc{N}(0,I_n)$.
\end{definition}
	
\begin{remark} 
One has, see \cite{Amelunxen2014living}, the relationship
\begin{equation}
\omega(K)^2 \le \delta(K) \le \omega(K)^2 + 1.
 \end{equation}
\end{remark}

A classic result for solution uniqueness, equivalent to the exact recovery of individual vectors
\cite[Thm. 4.35]{Rauhut-book}, is given next.
\begin{proposition}\label{prop:unique_cond_intersection}
The vector $\ol{x}$ is the \emph{unique} solution of the convex program \eqref{eq:minf-exact-constr} if and only if 
\begin{equation}\label{eq:cone-intersection}
\mc{D}_f(\ol{x}) \cap \mc{N}(A) = \{0\}.
\end{equation}
\end{proposition}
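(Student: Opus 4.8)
The plan is to translate the uniqueness of $\ol{x}$ into a statement about feasible descent directions and then identify those directions with the intersection $\mc{D}_f(\ol{x}) \cap \mc{N}(A)$. The starting observation is that a point $x$ is feasible for \eqref{eq:minf-exact-constr} if and only if $A x = A\ol{x}$, i.e. $x - \ol{x} \in \mc{N}(A)$; thus the admissible perturbations around $\ol{x}$ are exactly the vectors $h \in \mc{N}(A)$. Consequently, $\ol{x}$ is \emph{the unique} minimizer precisely when there is no nonzero $h \in \mc{N}(A)$ with $f(\ol{x} + h) \le f(\ol{x})$. The whole proof then reduces to showing that such a nonzero $h$ exists if and only if $\mc{D}_f(\ol{x}) \cap \mc{N}(A) \ne \{0\}$.

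For the direction ``$\Leftarrow$'' I would argue by contraposition. Suppose $\ol{x}$ is not the unique solution; then there is a feasible $x' \ne \ol{x}$ with $f(x') \le f(\ol{x})$. Setting $h := x' - \ol{x}$, feasibility gives $h \in \mc{N}(A)$, and $f(\ol{x}+h) \le f(\ol{x})$ places $h$ in the generating set $\{z - \ol{x} : f(z) \le f(\ol{x})\}$, hence in $\mc{D}_f(\ol{x})$. Since $h \ne 0$, the intersection is nontrivial, which is the contrapositive of the claimed implication.

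For the direction ``$\Rightarrow$'' I would take any $h \in \mc{D}_f(\ol{x}) \cap \mc{N}(A)$ with $h \ne 0$ and manufacture from it a second feasible point with no larger objective value, contradicting uniqueness. Because the sublevel set $\{z : f(z) \le f(\ol{x})\}$ is convex by convexity of $f$ and contains $\ol{x}$, its translate $\{z - \ol{x} : f(z) \le f(\ol{x})\}$ is a convex set containing the origin, so its conic hull reduces to $\{t(z - \ol{x}) : t \ge 0,\ f(z) \le f(\ol{x})\}$. Writing $h = t(z-\ol{x})$ with $t > 0$ (necessarily positive since $h \ne 0$), the point $\ol{x} + h/t = z$ satisfies $f(z) \le f(\ol{x})$, and $A(h/t) = 0$ because $\mc{N}(A)$ is a subspace, so $z$ is feasible and distinct from $\ol{x}$; this contradicts the assumed uniqueness.

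I expect the main subtlety to lie in the ``$\Rightarrow$'' direction, specifically in the passage through the conic hull: a nonzero element of $\mc{D}_f(\ol{x})$ is only guaranteed to be a \emph{positive multiple} of a genuine descent direction, so I must use convexity of the sublevel set to reduce the conic hull to single positive scalings, and then exploit that $\mc{N}(A)$ is scaling-invariant in order to rescale back to an honest feasible point. Since the descent cone here is defined without taking closure, no approximation argument is needed; had a closure been involved, one would additionally have to verify that limits of feasible descent directions remain usable, which is where the only real difficulty would arise.
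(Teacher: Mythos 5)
Your proof is correct. The paper does not supply its own proof of this proposition (it is quoted from the compressed-sensing literature), and your argument is precisely the standard one: unique optimality is equivalent to the nonexistence of a nonzero feasible descent direction, and the only delicate point --- that a nonzero element of $\mc{D}_f(\ol{x})$, being in the conic hull of the convex, origin-containing set $\{z-\ol{x}\colon f(z)\le f(\ol{x})\}$, is a positive multiple of a genuine descent direction and can be rescaled within the subspace $\mc{N}(A)$ back to a feasible point --- is exactly the one you identify and resolve correctly.
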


Condition \eqref{eq:cone-intersection} holds with high probability
for a random matrix $A$ is equal to the probability that a randomly rotated convex cone, here $\mc{N}(A)$,  
shares a ray with a fixed convex cone $\mc{D}_f(\ol{x})$. This probability is bounded in terms of the statistical dimension $\delta(\mc{D}_f(\ol{x}))$ as stated next.

\begin{theorem}{\cite[Thm. II]{Amelunxen2014living}} Fix a tolerance parameter $\veps \in(0,1)$. 
Let $\ol{x} \in\R^n$ be a fixed vector, and let $f:\R^n\to\ol{\R}$ be a proper convex function. Suppose $A\in\R^{m\times n}$ has
independent standard normal entries, and let $b=A\ol{x}$. Then
\begin{itemize}
\item $m \le \delta(\mc{D}_f(\ol{x})) - a_n \sqrt{n}$ $\Longrightarrow$ \eqref{eq:minf-exact-constr} succeeds with probability $\le \veps$;
\item $m \ge \delta(\mc{D}_f(\ol{x})) + a_n \sqrt{n}$ $\Longrightarrow$ \eqref{eq:minf-exact-constr} succeeds with probability $\ge 1- \veps$,
\end{itemize}
with $a_n:=\sqrt{8\log(4/\veps)}$.
\end{theorem}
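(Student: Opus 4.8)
The plan is to convert the probabilistic recovery statement into a purely geometric statement about the intersection of two cones, and then to control that intersection with conic integral geometry. First I would apply Proposition \ref{prop:unique_cond_intersection}: the program \eqref{eq:minf-exact-constr} recovers $\ol{x}$ (i.e. $\ol{x}$ is its unique solution) if and only if the descent cone meets the nullspace of $A$ only at the origin. Hence the event ``\eqref{eq:minf-exact-constr} succeeds'' is exactly $\{\mc{D}_f(\ol{x}) \cap \mc{N}(A) = \{0\}\}$, and the task reduces to estimating its probability. Next I would exploit the rotational invariance of the Gaussian ensemble: since $A$ has i.i.d. standard normal entries, almost surely $A$ has full row rank and $\mc{N}(A)$ is a subspace of dimension $n-m$ whose orientation is Haar-distributed on the Grassmannian; equivalently $\mc{N}(A)=QL_0$ for a fixed $(n-m)$-dimensional subspace $L_0$ and a uniformly random rotation $Q$. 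A linear subspace is a self-dual convex cone with statistical dimension equal to its dimension, so $\delta(L_0)=n-m$, and the event becomes the nontrivial intersection of the fixed cone $\mc{D}_f(\ol{x})$ with a randomly rotated subspace.

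The heart of the argument is the approximate kinematic formula for convex cones. I would introduce the conic intrinsic volumes $v_0(C),\dots,v_n(C)$ of a closed convex cone $C$; they are nonnegative, sum to one, and define an intrinsic-volume random variable $V_C$ with $\PP\{V_C=k\}=v_k(C)$. The statistical dimension is its mean, $\delta(C)=\EE[V_C]=\sum_k k\,v_k(C)$. The spherical kinematic (Crofton) formula expresses the intersection probability $\PP\{C\cap QD\ne\{0\}\}$ as an explicit tail sum of products $v_i(C)v_j(D)$, and when $D=L_0$ is a subspace this collapses to a single tail of the distribution of $V_C$. The crucial analytic input is a concentration inequality showing that $V_C$ clusters sharply around $\delta(C)$ with sub-Gaussian fluctuations of order $\sqrt{n}$, i.e. $\PP\{|V_C-\delta(C)|\ge t\}\le c\,\exp(-t^2/(c'n))$ for absolute constants.

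Combining the kinematic formula with this concentration yields the sharp transition. The intersection is trivial with probability at least $1-\veps$ once $\delta(\mc{D}_f(\ol{x}))+\dim L_0=\delta(\mc{D}_f(\ol{x}))+(n-m)$ drops below $n$ by more than the window width, i.e. when $m\ge\delta(\mc{D}_f(\ol{x}))+a_n\sqrt{n}$, giving the ``succeeds with probability $\ge 1-\veps$'' half; and it is nontrivial with probability at least $1-\veps$ in the opposite regime $m\le\delta(\mc{D}_f(\ol{x}))-a_n\sqrt{n}$, giving the ``succeeds with probability $\le\veps$'' half (a shared ray forces failure by Proposition \ref{prop:unique_cond_intersection}). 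Tracking the constants through the Gaussian concentration bound produces exactly $a_n=\sqrt{8\log(4/\veps)}$ as the half-width of the transition window.

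The main obstacle is the middle step. The reductions using Proposition \ref{prop:unique_cond_intersection} and Gaussian invariance are routine, but the approximate kinematic formula rests on two nontrivial ingredients: the exact spherical kinematic formula for conic intrinsic volumes, and the concentration of the intrinsic-volume distribution about $\delta(C)$ with the correct $\sqrt{n}$ scale and the precise constant. Establishing the latter — typically through a master Laplace-transform bound derived from a conic Steiner formula — is the technically demanding part, and it is precisely where the explicit width $a_n\sqrt{n}$ of the transition is produced.
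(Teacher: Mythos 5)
The paper does not prove this theorem: it is imported verbatim as \cite[Thm.~II]{Amelunxen2014living}, so there is no internal proof to compare against. Your sketch is a faithful outline of the actual proof in that reference --- reduction to the event $\mc{D}_f(\ol{x})\cap\mc{N}(A)=\{0\}$ via the descent-cone characterization, rotation invariance turning $\mc{N}(A)$ into a Haar-random $(n-m)$-dimensional subspace, the conic Crofton/kinematic formula in terms of intrinsic volumes, and the concentration of the intrinsic-volume distribution about $\delta(C)$ which supplies the window width $a_n\sqrt{n}$. One small inaccuracy: a linear subspace $L$ is not a self-dual cone (its polar is $L^\perp$); the fact you actually need, $\delta(L)=\dim L$, is nevertheless correct, so nothing in the argument is affected.
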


There is a useful tool for bounding the  statistical dimension in terms of $\partial f(\ol{x})$. Interestingly, this bound
is tight for some classes of $f$, e.g. norms.
\begin{proposition}{\cite{Chandrasekaran2012convex,Amelunxen2014living}} \label{prop:Jtau}
Let  $f:\R^n\to\ol{\R}$ be a proper convex function and let $x\in\R^n$.
Assume that the subdifferential  $\partial f(\ol{x})$ is nonempty, compact, and does not contain the origin. Define the function
 \begin{equation}\label{eq:def-J-tau}
 J(\tau)=J(\tau;\partial f(\ol{x})):=\EE [\dist(X,\tau \partial f(\ol{x})], \quad {\rm for} \ \tau\ge 0,
 \end{equation}
 where $X\sim\mc{N}(0,I)$. We have
 \begin{equation}\label{eq:def-J-tau}
 \delta(\mc{D}_f(\ol{x}))\le \inf_{\tau\ge 0} J(\tau) .
 \end{equation}
Furthermore, the function $J$ is strictly convex, continuous at $\tau =0 $, and differentiable for $\tau \ge 0$. It achieves its
minimum at a unique point.
\end{proposition}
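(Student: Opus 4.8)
The plan is to reduce everything to two classical facts: (a) projection onto a closed convex cone $K$ and onto its polar $K^\circ$ are linked by the Moreau decomposition, and (b) the polar of the descent cone is the cone generated by the subdifferential. First I would record the Moreau identity: for a closed convex cone $K$ and any $x$ one has $x=\Pi_K(x)+\Pi_{K^\circ}(x)$ with the two summands orthogonal, so that $\|\Pi_K(x)\|=\dist(x,K^\circ)$. Applying this to $K=\mc{D}_f(\ol{x})$ rewrites the statistical dimension as
\begin{equation*}
\delta(\mc{D}_f(\ol{x}))=\EE\big[\|\Pi_{\mc{D}_f(\ol{x})}(X)\|^2\big]=\EE\big[\dist(X,\mc{D}_f(\ol{x})^\circ)^2\big].
\end{equation*}
Next I would identify the polar cone. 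Since $f$ is convex and $0\notin\partial f(\ol{x})$, the point $\ol{x}$ is not a minimizer and $\mc{D}_f(\ol{x})^\circ=\cl(\cone\,\partial f(\ol{x}))$; compactness of $\partial f(\ol{x})$ together with $0\notin\partial f(\ol{x})$ makes this conic hull already closed, so $\mc{D}_f(\ol{x})^\circ=\cone\,\partial f(\ol{x})=\bigcup_{\tau\ge 0}\tau\,\partial f(\ol{x})$. Because $\tau\,\partial f(\ol{x})\subset\cone\,\partial f(\ol{x})$ for every fixed $\tau\ge 0$, we get $\dist(X,\mc{D}_f(\ol{x})^\circ)\le\dist(X,\tau\,\partial f(\ol{x}))$ pointwise; squaring, taking expectations, and passing to the infimum over $\tau$ yields $\delta(\mc{D}_f(\ol{x}))\le\inf_{\tau\ge 0}J(\tau)$.

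For the structural properties of the squared-distance functional $J(\tau)=\EE[\dist(X,\tau\partial f(\ol{x}))^2]$, the key observation is that $\tau\mapsto\dist(x,\tau C)$ is convex for any convex $C$. I would prove this by a perspective argument: the set $\hat{C}:=\{(\tau v,\tau)\colon v\in C,\ \tau\ge 0\}$ is a convex cone, and $\dist(x,\tau C)=\inf_w\{\|x-w\|+\delta_{\hat{C}}(w,\tau)\}$ is the partial minimization over $w$ of a jointly convex function of $(x,w,\tau)$, hence convex in $(x,\tau)$. Since this distance is nonnegative and $t\mapsto t^2$ is convex and nondecreasing on $[0,\infty)$, the integrand $\dist(x,\tau C)^2$ is convex in $\tau$, and convexity passes through the expectation, giving convexity of $J$.

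Finally I would address the remaining claims. Continuity at $\tau=0$ follows from $\tau\partial f(\ol{x})\to\{0\}$ as $\tau\downarrow 0$ (by boundedness of $\partial f$) together with dominated convergence, giving $J(0^+)=\EE[\|X\|^2]=n=J(0)$. Differentiability for $\tau\ge 0$ comes from differentiating under the expectation: the squared distance to a closed convex set is $C^1$ with gradient $2(x-\Pi_{\tau\partial f(\ol{x})}(x))$, and uniqueness of the projection lets one apply an envelope (Danskin) argument together with a dominating bound. Coercivity is immediate, since $0\notin\partial f(\ol{x})$ forces $\inf_{v\in\partial f(\ol{x})}\|v\|>0$, so $\dist(X,\tau\partial f(\ol{x}))\to\infty$ and hence $J(\tau)\to\infty$; combined with continuity this gives existence of a minimizer, while strict convexity forces uniqueness. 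The main obstacle is precisely \emph{strict} convexity: each integrand $\dist(X,\tau\partial f(\ol{x}))^2$ is only convex, being flat on the $\tau$-interval where $X\in\tau\partial f(\ol{x})$, so strictness must be recovered from the Gaussian average. I would argue that, since a mean of convex functions can be affine on an interval only if almost every integrand is affine there, it suffices to show that for any fixed nondegenerate interval the set of $X$ on which $g_X(\tau):=\dist(X,\tau\partial f(\ol{x}))^2$ is affine is Lebesgue-null; this region is lower-dimensional because the optimal projection moves strictly with $\tau$ outside a negligible set, whence integration against the Gaussian density produces a strictly convex $J$.
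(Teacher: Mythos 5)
The paper does not actually prove Proposition \ref{prop:Jtau}; it is imported verbatim from \cite{Chandrasekaran2012convex,Amelunxen2014living}, so your proposal has to be judged against the standard argument in those references. Most of it is that argument and is correct: the Moreau decomposition giving $\delta(K)=\EE[\dist^2(X,K^\circ)]$, the identification $\mc{D}_f(\ol{x})^\circ=\cone\partial f(\ol{x})$ (closed because $\partial f(\ol{x})$ is compact, convex and avoids the origin), the resulting inequality $\delta(\mc{D}_f(\ol{x}))\le\inf_{\tau\ge0}J(\tau)$, the joint convexity of $(x,\tau)\mapsto\dist(x,\tau C)$ via partial minimization, continuity at $0$ and coercivity. (You correctly read $J$ as the expected \emph{squared} distance, which is what the paper means despite the typo in \eqref{eq:def-J-tau}; also note that at $\tau=0$ the Danskin minimizer is all of $C$, so only one-sided differentiability is available there, which is all the statement needs.)

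The genuine gap is in the strict convexity step. Your reduction ``$J$ affine on $[\tau_1,\tau_2]$ $\Rightarrow$ almost every integrand $g_X(\tau)=\dist^2(X,\tau\partial f(\ol{x}))$ is affine there'' is fine, but the sufficient condition you then aim for --- that the set of $X$ on which $g_X$ is affine on a fixed nondegenerate interval is Lebesgue-null --- is \emph{false} in general, so the proposed route cannot be completed. Counterexample: let $C\subset\R^2$ be the closed unit ball centred at $(10,0)$. Then $\tau C$ is the ball of radius $\tau$ centred at $(10\tau,0)$, and a short computation shows $\bigcap_{\tau\in[1,\,1.01]}\tau C = (1\cdot C)\cap(1.01\cdot C)$, a lens with nonempty interior. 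For every $X$ in this set $g_X\equiv 0$ on $[1,1.01]$, hence affine, on a set of positive Gaussian measure; the region is not lower-dimensional and the projection does not ``move strictly'' there. What you actually need is only that the affine set is not of \emph{full} measure, and the clean way to get this is: (i) from the equality case of the convexity inequality $\dist(X,\tau C)\le\lambda\dist(X,\tau_1C)+(1-\lambda)\dist(X,\tau_2C)$ combined with the strict convexity of $t\mapsto t^2$, an integrand that is affine on $[\tau_1,\tau_2]$ must in fact be \emph{constant} there; (ii) exhibit a positive-measure set of $X$ on which $g_X(\tau_1)\ne g_X(\tau_2)$, e.g.\ take a separating direction $u$ with $\la u,v\ra\ge c>0$ on $C$ and $v^\ast\in C$ maximizing $\la u,\cdot\ra$; for $X$ in a small ball around $\tau_2 v^\ast$ one has $\dist(X,\tau_2C)$ arbitrarily small while $\dist(X,\tau_1C)\ge\la u,X\ra-\tau_1\max_{v\in C}\la u,v\ra$ is bounded away from $0$. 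This contradiction restores strict convexity, and with it the uniqueness of the minimizer.
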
 
The above results provide a recipe to upper bound $ \delta(\mc{D}_f(\ol{x}))$. This consists of the following steps.
\begin{itemize}
\item Compute $\partial f(\ol{x})$;
\item For each $\tau> 0$, compute $J(\tau)=\EE [\dist(X,\tau \partial f(\ol{x})]$;
\item Find the unique solution to $J'(\tau)=0$.
\end{itemize}

We also have the following error bound.
\begin{theorem}{\cite[Thm. 4.3.]{Amelunxen2014living}}\label{thm:norm-error-bound}
Let $f:\R^n\to\R$ be a norm on $\R^n$ and let $x\in\R^n\setminus\{0\}$. Then
\begin{equation}\label{eq:error-bound}
0\le \inf_{\tau\ge 0} J(\tau)  - \delta(\mc{D}_f(\ol{x})) \le \frac{2\sup\{\Vert p\Vert\colon p\in\partial f(\ol{x})\}}{f(\ol{x}/\Vert \ol{x}\Vert)}
\end{equation}
\end{theorem}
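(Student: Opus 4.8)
The plan is to reduce the two-sided estimate to a comparison between a \emph{coupled} and a \emph{decoupled} optimization over the scaling parameter $\tau$. The lower bound $0\le \inf_{\tau\ge0}J(\tau)-\delta(\mc{D}_f(\ol{x}))$ is already supplied by Proposition~\ref{prop:Jtau}, so all the work is in the upper bound. Throughout I read $J(\tau)=\EE[\dist^2(X,\tau\partial f(\ol{x}))]$ (the expected \emph{squared} distance, matching the squared statistical dimension). First I would rewrite $\delta$ in polar form: by Moreau's decomposition $\|X\|^2=\|\Pi_K(X)\|^2+\dist^2(X,K^\circ)$ for a closed convex cone $K$, so $\delta(\mc{D}_f(\ol{x}))=\EE[\dist^2(X,\mc{D}_f(\ol{x})^\circ)]$, and the polar of the descent cone is the normal cone $N_f(\ol{x})=\cone\partial f(\ol{x})$. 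Because $f$ is a norm and $\ol{x}\neq 0$, the set $\partial f(\ol{x})$ is compact and avoids the origin, hence $N_f(\ol{x})=\{\tau p:\tau\ge0,\,p\in\partial f(\ol{x})\}$ is closed and $\dist^2(X,N_f(\ol{x}))=\inf_{\tau\ge0}\dist^2(X,\tau\partial f(\ol{x}))$. Thus $\delta(\mc{D}_f(\ol{x}))=\EE[\inf_{\tau\ge0}\dist^2(X,\tau\partial f(\ol{x}))]$, whereas $\inf_\tau J(\tau)=\inf_\tau\EE[\dist^2(X,\tau\partial f(\ol{x}))]$; the gap is exactly the cost of replacing the per-sample optimal scaling by a single deterministic one.

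Next I would establish a pointwise comparison. For each $X$ choose $\tau_X\ge0$ and $p_X\in\partial f(\ol{x})$ with $\Pi_{N_f(\ol{x})}(X)=\tau_X p_X$. The projection onto the cone satisfies $\langle X-\tau_X p_X,\tau_X p_X\rangle=0$, i.e. $\langle X,p_X\rangle=\tau_X\|p_X\|^2$ when $\tau_X>0$. Using $p_X$ as a (sub-optimal) competitor for $\dist^2(X,\tau_0\partial f(\ol{x}))$ and expanding $\|X-\tau_0 p_X\|^2-\|X-\tau_X p_X\|^2$, the cross terms cancel by the first-order condition and leave the clean estimate $\dist^2(X,\tau_0\partial f(\ol{x}))-\dist^2(X,N_f(\ol{x}))\le \|p_X\|^2(\tau_0-\tau_X)^2\le \rho^2(\tau_0-\tau_X)^2$, where $\rho:=\sup\{\|p\|:p\in\partial f(\ol{x})\}$ (the vanishing-projection event $\tau_X=0$ being treated separately). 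Taking expectations and using $\inf_\tau J(\tau)\le J(\tau_0)$ for the deterministic choice $\tau_0=\EE[\tau_X]$ yields $\inf_\tau J(\tau)-\delta(\mc{D}_f(\ol{x}))\le \rho^2\,\mathrm{Var}(\tau_X)$.

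Finally I would convert $\rho^2\,\mathrm{Var}(\tau_X)$ into the advertised bound $2\rho/f(\ol{x}/\|\ol{x}\|)$ using the special geometry of the subdifferential of a norm. Writing $u:=\ol{x}/\|\ol{x}\|$ and $\kappa:=f(u)$, every $p\in\partial f(\ol{x})$ obeys $\langle p,u\rangle=f(u)=\kappa$, so the subgradients of a norm all lie on a common supporting hyperplane at height $\kappa$ in the direction $u$. Consequently $\langle\Pi_{N_f(\ol{x})}(X),u\rangle=\tau_X\kappa$, and $\tau_X=\kappa^{-1}\langle\Pi_{N_f(\ol{x})}(X),u\rangle$ is an explicit, essentially $1$-Lipschitz functional of $X$; this identity is what produces the denominator $\kappa=f(u)$ and, after estimating the fluctuation of the projection, the constant $2$.

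I expect the main obstacle to be precisely this last quantitative step: controlling $\mathrm{Var}(\tau_X)$ — equivalently, proving $\rho^2\,\mathrm{Var}(\tau_X)\le 2\rho/\kappa$ — and disposing of the boundary event $\Pi_{N_f(\ol{x})}(X)=0$ uniformly. The soft part (the polar reformulation and the pointwise quadratic comparison) is routine, but the sharp constant $2$ and the exact dependence on $f(u)$ require the hyperplane identity together with a careful, norm-specific estimate of how tightly the optimal scaling $\tau_X$ concentrates; this is the step where the full strength of $f$ being a genuine norm, rather than an arbitrary gauge, is used.
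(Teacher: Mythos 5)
First, a point of reference: the paper does not prove this statement at all --- it is imported verbatim from \cite{Amelunxen2014living} and used as a black box --- so there is no in-paper argument to compare yours against; I can only assess the proposal on its own terms.

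The structural part of your argument is correct. The polar identity $\delta(\mc{D}_f(\ol{x}))=\EE[\dist^2(X,\mc{D}_f(\ol{x})^\circ)]$, the observation that $\mc{D}_f(\ol{x})^\circ=\cone\partial f(\ol{x})$ is already closed because a norm's subdifferential at $\ol{x}\neq 0$ is compact and avoids the origin, the reading of the gap as the cost of decoupling the scaling $\tau$ from the sample $X$, the pointwise computation in which the cross term vanishes by $\langle X-\Pi(X),\Pi(X)\rangle=0$ (with the event $\tau_X=0$ handled by $\langle X,p\rangle\le 0$), and the hyperplane identity $\tau_X=\kappa^{-1}\langle \Pi(X),u\rangle$ giving $\mathrm{Var}(\tau_X)\le\kappa^{-2}$ via the Gaussian Poincar\'e inequality --- all of this is sound.

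The genuine gap is the one you flag yourself, and it is not merely a missing ``careful estimate'': your route terminates at $\inf_{\tau\ge 0}J(\tau)-\delta(\mc{D}_f(\ol{x}))\le \rho^2\,\mathrm{Var}(\tau_X)\le(\rho/\kappa)^2$, whereas the theorem asserts the bound $2\rho/\kappa$. These are genuinely different conclusions: for $f=\|\cdot\|_1$ and an $s$-sparse $\ol{x}$ with equal-magnitude entries one has $\rho=\sqrt{n}$ and $\kappa=\sqrt{s}$, so your bound reads $n/s$ against the claimed $2\sqrt{n/s}$ --- weaker by an unbounded factor precisely in the sparse regime where the estimate is needed. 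Moreover, the quadratic rate cannot be repaired inside your scheme: the comparison $\|X-\tau_0 p_X\|^2-\|X-\tau_X p_X\|^2=\|p_X\|^2(\tau_0-\tau_X)^2$ is an exact identity with no slack, $\|p_X\|$ is genuinely of order $\rho$ on a non-negligible event, and $\mathrm{Var}(\tau_X)$ is generically of order $\kappa^{-2}$, so $\rho^2\,\mathrm{Var}(\tau_X)$ really is of order $(\rho/\kappa)^2$; no sharper variance bound will produce $2\rho/\kappa$. To obtain the linear dependence on $\rho/\kappa$ the comparison itself must be restructured so that the Lipschitz constant $\rho$ of $\tau\mapsto\dist(X,\tau\partial f(\ol{x}))$ (unsquared) is paid only once --- for instance by expanding $\tau\mapsto\dist^2(X,\tau\partial f(\ol{x}))$ to first order around the deterministic minimizer $\tau_\star$ of $J$, where $J'(\tau_\star)=0$ turns the excess into a covariance that Cauchy--Schwarz bounds by (a $\rho$-type quantity)$\times\sqrt{\mathrm{Var}(\tau_X)}\le(\dots)\cdot\kappa^{-1}$ --- rather than by substituting the per-sample projector $p_X$ at a deterministic scale. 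As written, your proposal establishes a correct but strictly weaker inequality, not the stated one.
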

For the $\ell_1$-case the r.h.s. can be made very accurate for large sparsity parameters $s$
providing an accurate estimate of the statistical dimension of the $\ell_1$-descent cone.
On the other hand, this error estimate is not very accurate when the sparsity $s$ is small.
The work in \cite{Foygel2014} contains a bound that improves this result and also extends to more general functions
with some additional properties.
\begin{theorem}{\cite[Prop.~1]{Foygel2014}}\label{thm:Foygel-bound} Suppose that, for $\ol{x}\ne 0$, the subdifferential $\partial f(\ol{x})$ satisfies
a weak decomposability assumption, i.e.
\begin{equation}\label{eq:weak-decomposable-cond}
\exists \ol{p} \in \partial f(\ol{x}) \quad \text{such that}\quad \la p - \ol{p},\ol{p} \ra =0, \qquad \forall p \in \partial f(\ol{x}).
\end{equation}
Then
\begin{equation}\label{eq:Foygel-bound}
 \delta(\mc{D}_f(\ol{x}))\le \inf_{\tau\ge 0} J(\tau)\le \delta(\mc{D}_f(\ol{x})) +6 ,
 \end{equation}
with $J(\tau)= \EE [\dist(X,\tau \partial f(\ol{x})]$.
\end{theorem}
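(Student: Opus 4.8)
The left-hand inequality $\delta(\mc{D}_f(\ol{x}))\le\inf_{\tau\ge0}J(\tau)$ is already furnished by Prop.~\ref{prop:Jtau}, so the whole content is the upper bound. The plan is to reduce $\delta$ to an expected squared distance to the \emph{cone} generated by the subdifferential, and then to exploit the weak decomposability \eqref{eq:weak-decomposable-cond} to show that a single, deterministic scaling $\tau$ already captures that distance up to a universal additive constant.

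First I would pass to the polar picture. The polar of the descent cone is $\mc{D}_f(\ol{x})^\circ=\cone(\partial f(\ol{x}))$ (closed here, since $\partial f(\ol{x})$ is compact and misses the origin), so Moreau's decomposition gives $\delta(\mc{D}_f(\ol{x}))=\EE[\dist(X,\cone(\partial f(\ol{x})))^2]$. Since $\cone(\partial f(\ol{x}))=\bigcup_{\tau\ge0}\tau\,\partial f(\ol{x})$, this reads
\begin{equation*}
\delta(\mc{D}_f(\ol{x}))=\EE\Big[\inf_{\tau\ge0}\dist(X,\tau\,\partial f(\ol{x}))^2\Big],
\end{equation*}
whereas $\inf_{\tau\ge0}J(\tau)=\inf_{\tau\ge0}\EE[\dist(X,\tau\,\partial f(\ol{x}))^2]$. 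Thus both sides of \eqref{eq:Foygel-bound} differ only by the exchange of $\EE$ and $\inf_\tau$: the left inequality is the elementary $\EE\inf\le\inf\EE$, and the remaining task is to bound the reverse gap.

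Next I would use \eqref{eq:weak-decomposable-cond}. Writing $\rho:=\|\ol{p}\|$ and $e:=\ol{p}/\rho$, the condition $\la p-\ol{p},\ol{p}\ra=0$ says exactly that $\partial f(\ol{x})\subseteq H:=\{p:\la p,e\ra=\rho\}$, a hyperplane with unit normal $e$ at distance $\rho$ from the origin. Splitting the Gaussian as $X=\xi e+W$ with $\xi=\la X,e\ra\sim\mc{N}(0,1)$ and $W$ standard normal on $e^\perp$ independent of $\xi$, every $q\in\partial f(\ol{x})$ decomposes as $q=\rho e+r_q$ with $r_q\in e^\perp$, whence
\begin{equation*}
\dist(X,\tau\,\partial f(\ol{x}))^2=(\xi-\rho\tau)^2+\dist(W,\tau R)^2,\qquad R:=\Pi_{e^\perp}(\partial f(\ol{x})).
\end{equation*}
Here $\tau$ has become a genuine radial coordinate along $e$, and since $\Pi_{e^\perp}(\ol{p})=0$ we have $0\in R$, so $\tau\mapsto\dist(W,\tau R)^2$ is non-increasing and tends to $\dist(W,\cone R)^2$ as $\tau\to\infty$.

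Finally I would bound the gap by evaluating $J$ at its finite, unique minimizer $\tau^\ast$ (Prop.~\ref{prop:Jtau}) and estimating
\begin{equation*}
J(\tau^\ast)-\delta(\mc{D}_f(\ol{x}))=\EE\Big[(\xi-\rho\tau^\ast)^2+\dist(W,\tau^\ast R)^2-\inf_{\tau\ge0}\big\{(\xi-\rho\tau)^2+\dist(W,\tau R)^2\big\}\Big].
\end{equation*}
The integrand is the cost of freezing the radial scale at $\tau^\ast$ rather than using the pointwise optimum. The main obstacle is precisely this step: the penalty $(\xi-\rho\tau)^2$ grows quadratically while $\dist(W,\tau R)^2$ only flattens as $\tau\to\infty$, so one must balance the two and show the expected radial error is bounded independently of $n$. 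I expect to control it by comparing the pointwise optimal scale with $\tau^\ast$ through the first-order condition $J'(\tau^\ast)=0$ and the strict convexity of $J$, reducing the estimate to second moments of the one-dimensional Gaussian $\xi$ along $e$; carrying the constants through this one-dimensional tail computation is what produces the dimension-free bound, and tightening it yields the stated value $6$.
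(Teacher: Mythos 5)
The paper offers no proof of this statement at all---it is imported verbatim from \cite[Prop.~1]{Foygel2014}---so your proposal can only be judged on its own completeness. Your reduction is correct and is in fact the standard route: Moreau's decomposition gives $\delta(\mc{D}_f(\ol{x}))=\EE[\dist^2(X,\mc{D}_f(\ol{x})^\circ)]$, the polar identity $\mc{D}_f(\ol{x})^\circ=\cl\cone(\partial f(\ol{x}))$ turns this into $\EE[\inf_{\tau\ge 0}\dist^2(X,\tau\partial f(\ol{x}))]$, the lower bound is then just $\EE\inf\le\inf\EE$, and the orthogonal splitting $X=\xi e+W$ along $e=\ol{p}/\|\ol{p}\|$ is exactly how the weak decomposability hypothesis \eqref{eq:weak-decomposable-cond} enters in Foygel--Mackey. (Two small caveats: you tacitly assume $\ol{p}\ne 0$, and you invoke Prop.~\ref{prop:Jtau} for a finite unique minimizer $\tau^\ast$, which requires compactness of $\partial f(\ol{x})$---neither is part of the stated hypothesis; one should argue with the infimum and a near-optimal $\tau$.)

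The genuine gap is that the proof stops exactly where the theorem begins. The whole quantitative content is the inequality $\inf_{\tau\ge 0}\EE[\dist^2(X,\tau\partial f(\ol{x}))]\le\EE[\inf_{\tau\ge 0}\dist^2(X,\tau\partial f(\ol{x}))]+6$, and your final paragraph replaces its derivation with a statement of intent (``I expect to control it \ldots tightening it yields the stated value $6$''). Nothing in the proposal shows the gap between $\inf\EE$ and $\EE\inf$ is even bounded independently of $n$, let alone by $6$. What is missing, concretely: (a) an explicit deterministic choice of $\tau$ (not merely ``the minimizer of $J$'', since comparing $J(\tau^\ast)$ to $\delta$ is circular---one must exhibit a $\tau$ for which the excess is computable); (b) a pointwise inequality of the form $\dist^2(X,\tau\partial f(\ol{x}))\le\dist^2(X,\cone\partial f(\ol{x}))+(\tau-\hat\tau(X))^2\,\|p(X)\|^2+(\text{cross term})$, where $\hat\tau(X)p(X)=\Pi_{\cone\partial f(\ol{x})}(X)$, together with the verification---using both the projection optimality condition and \eqref{eq:weak-decomposable-cond}---that the cross term is nonpositive in all cases, including $\hat\tau(X)=0$; and (c) the one-dimensional Gaussian second-moment computation bounding the expected radial excess by the absolute constant $6$. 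Step (c) is where the hyperplane structure $\la p,\ol{p}\ra=\|\ol{p}\|^2$ is actually used and where the constant originates; without (a)--(c) you have only re-derived, in polar form, the lower bound already supplied by Prop.~\ref{prop:Jtau}.
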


\subsection{Phase Transitions for Case Studies: \eqref{def-f1-l1}-\eqref{def-f6-TV-box}}
Above we discussed probabilistic uniqueness  for the general problem $$\min_{Ax=A\ol{x}} f(x).$$
In this section we discuss how and if we can specialize these results to predict
uniqueness for our objective functions \eqref{def-f1-l1}-\eqref{def-f6-TV-box}. 

\begin{remark}
In the case of $f_1(x) = \|x\|_1$ from \eqref{def-f1-l1}
and $f_4(x) = \|D x\|_1$ from  \eqref{def-f4-TV} the statistical
dimension of the descent cone  $\mc{D}_f(\ol{x})$ can be well approximated 
in terms of the expected squared distance to the scaled subdifferential. 
The reason is that
the subdifferential $\partial f(\ol{x})$ is compact, see \eqref{eq:subdiff-Dl1}, and thus by
Prop. \ref{prop:Jtau} we know that $J(\cdot):\R_+\to \R_+$ has a unique minimizer.
Moreover, $\min_{\tau\ge 0 }J(\tau)$ provides a tight approximation in view of
Thm. \ref{thm:norm-error-bound}  and Thm. \ref{thm:Foygel-bound}, since $f_1$ is a norm
and $\partial f_4$ is weakly decomposable for the choice $D$ equal to the 1D finite difference operator, 
as shown in \cite{Zhang-1DTV}. For the 2D finite difference operator the weakly decomposable property has not been
previously  investigated.
\end{remark}

\begin{remark}\label{rem:problematic-choices} The above situation changes for the objectives
  
\begin{align*}
f_2(x) &= \|x\|_1  + \delta_{\R^n_+}(x) , \qquad f_3(x) = \|x\|_1  + \delta_{{[0,1]}^n}(x) \\
f_5(x) &= \|Dx\|_1  + \delta_{\R^n_+}(x) , \qquad
f_6(x) = \|Dx\|_1  + \delta_{{[0,1]}^n}(x),  
\end{align*}
from  \eqref{def-f2-l1-nonneg}, \eqref{def-f3-l1-box}, \eqref{def-f5-TV-nonneg} and \eqref{def-f6-TV-box}.
It is clear that $\partial f(\ol{x})$ is \emph{not} compact, for all above choices, compare \eqref{eq:subdiff-Dl1} and \eqref{eq:subdiff-ind-box}.
Thus, we have no guarantee that, $J$ has even a minimizer.
Since $\partial f(\ol{x})$ is \emph{not} weakly decomposable, even if  $\min_{\tau\ge 0 }J(\tau)$ exists, we are not guaranteed  
a tight upper bound on the statistical dimension $ \delta(\mc{D}_f(\ol{x}))$ 
and consequently on the number of sufficient Gaussian measurements $m$ for exact recovery of $\ol{x}$ with high probability. 
\end{remark}

Explicit curves for $\min_{\tau}J(\tau)$ can be computed only in the case of $f_1$, $f_2$, $f_3$ along the lines of \cite{Amelunxen2014living}.
We skip the details here and illustrate this curves in Sect.~\ref{sec:experiments}.
For $f_4$, $f_5$ and $f_6$ explicit curves for $J$  and for $\min_{\tau}J(\tau)$  are missing.
We use an approximation to the upper bound $J(\tau) = \mathbb{E}\left[\dist^2 (X,\tau \partial f(\ol{x})) \right]$ where $X \sim \mc{N}(0,I)$. To approximate the expected value we use a very large $k \in\N$ and calculate

\begin{equation*}
  h_k(\tau) = \frac 1k \sum_{i=1}^k \dist^2 (X_i,\tau \partial f(\ol{x})) =: J_{\rm approx}(\tau) \approx J(\tau),
\end{equation*}
where each $X_i$ is sampled from the normal distribution $\mc{N}(0,I)$.
Note that for computing each $\dist^2 (X_i,\tau \partial f(\ol{x}))$ one is faced with solving a quadratic program, i.e.

\begin{equation*}
\min \|X_i - \tau y\|^2_2\qquad \text{subject to} \quad y \in \partial f(\ol{x}).
\end{equation*}
Note that the constraints are linear since $\partial f(\ol{x})$ is described by linear equalities and inequalities, compare \eqref{eq:subdiff-Dl1} and \eqref{eq:subdiff-ind-box}.

Based on the empirical results from Sect. \ref{sec:experiments} we conjecture.
\begin{conjecture}
For the choices of $f$ in Remark \ref{rem:problematic-choices} the statistical dimension is well approximated similar to \eqref{eq:Foygel-bound}
by the squared Euclidean distance to the scaled subdifferential, i.e.

\begin{equation*}
\min_\tau J(\tau) := \mathbb{E}\left[\dist^2 (X,\tau \partial f(\ol{x})) \right] \approx \delta(\partial f(\ol{x})).
\end{equation*}
\end{conjecture}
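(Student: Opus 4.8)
The plan is to upgrade the conjectured ``$\approx$'' into a genuine two-sided estimate
\[
\delta(\mc{D}_f(\ol{x})) \;\le\; \inf_{\tau\ge 0} J(\tau) \;\le\; \delta(\mc{D}_f(\ol{x})) + C ,
\]
with a small dimension-free constant $C$, in the spirit of the $+6$ of Thm.~\ref{thm:Foygel-bound}; here $\delta$ is read as the statistical dimension of the descent cone $\mc{D}_f(\ol{x})$, the notation $\delta(\partial f(\ol{x}))$ in the statement being a slight abuse. The left inequality is the substance-free half, and I would dispatch it first, for all four $f$ at once and without invoking compactness. By conic polarity one has $\mc{D}_f(\ol{x})^\circ = \cl\cone\partial f(\ol{x})$, and Moreau's decomposition for a closed convex cone gives $\|\Pi_{\mc{D}_f(\ol{x})}(X)\|^2 = \dist^2(X,\mc{D}_f(\ol{x})^\circ)$, whence
\[
\delta(\mc{D}_f(\ol{x})) = \EE\,\dist^2\bigl(X,\cl\cone\partial f(\ol{x})\bigr).
\]
Since $\tau\,\partial f(\ol{x}) \subseteq \cl\cone\partial f(\ol{x})$ for every $\tau\ge 0$, distances can only increase, so $J(\tau)\ge\delta(\mc{D}_f(\ol{x}))$ for all $\tau$ and the lower bound follows. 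This is exactly the inequality recorded in Prop.~\ref{prop:Jtau}, now seen to persist after the loss of compactness.

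The real content is the reverse inequality, and I would first settle the separable cases $f_2,f_3$. For these the subdifferential is a \emph{product} set: using \eqref{eq:subdiff-Dl1} and \eqref{eq:def-normal-cone-box} one finds $\partial f_2(\ol{x}) = \{\eins_S\}\times(-\infty,1]^{S^c}$ and $\partial f_3(\ol{x}) = [1,\infty)^{S}\times(-\infty,1]^{S^c}$ for binary $\ol{x}$, with $S=\supp(\ol{x})$. Consequently $\dist^2(X,\tau\,\partial f_2(\ol{x})) = \|X_S-\tau\eins\|^2 + \sum_{i\in S^c}(X_i-\tau)_+^2$, and $J_2$ collapses to the one-dimensional expression $J_2(\tau) = |S|(1+\tau^2) + (n-|S|)\,\EE[(Z-\tau)_+^2]$ with $Z\sim\mc{N}(0,1)$, whose minimizer solves an explicit stationarity equation in $\phi$ and $\Phi$; an identical reduction handles $f_3$. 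In parallel the descent cone is the explicit polyhedron $\mc{D}_{f_2}(\ol{x}) = \{d : d_{S^c}\ge 0,\ \eins^\top d \le 0\}$, whose statistical dimension I would evaluate directly from $\delta = \EE\,\dist^2(X,\mc{D}_f(\ol{x})^\circ)$. The target is to show that $\inf_\tau J_2(\tau) - \delta(\mc{D}_{f_2}(\ol{x}))$ stays bounded by an absolute constant uniformly in $|S|$ and $n$. I would explicitly warn against the seductive shortcut through the unconstrained $\ell_1$ problem: although $\partial f_2\supseteq\partial f_1$ gives $\inf J_2\le\inf J_1$ and $\mc{D}_{f_2}\subseteq\mc{D}_{f_1}$ gives $\delta(\mc{D}_{f_2})\le\delta(\mc{D}_{f_1})$, this only controls $\inf J_2$ against $\delta(\mc{D}_{f_1})$, and the slack $\delta(\mc{D}_{f_1})-\delta(\mc{D}_{f_2})$ is the dimension saved by the sign constraint, which grows with the problem size; a \emph{direct} gap estimate is therefore unavoidable.

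A structural observation should guide the general argument and reveal where the difficulty truly concentrates. Weak decomposability \eqref{eq:weak-decomposable-cond} is \emph{not} uniformly absent here, contrary to a first impression: for the nonnegativity-constrained $f_2$ the witness $\ol{p}=(\eins_S,0_{S^c})$ satisfies $\la p-\ol{p},\ol{p}\ra=0$ for all $p\in\partial f_2(\ol{x})$, because the free coordinates are orthogonalized by taking $\ol{p}_{S^c}=0\in(-\infty,1]$. What genuinely destroys decomposability is an \emph{active upper} box constraint: in $f_3$ (and $f_6$) a coordinate with $\ol{x}_i=1$ forces the subdifferential component into $[1,\infty)$, and no witness can zero it out, so Thm.~\ref{thm:Foygel-bound} is truly inapplicable there. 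Thus for the purely lower-bounded $f_2,f_5$ the remaining task is to transfer the FMR bound past the compactness implicitly used in its proof (the normal-cone part from \eqref{eq:subdiff-ind-box} admits the zero witness, reducing matters to the decomposability of the underlying $\ell_1$/$\mathrm{TV}$ term, known for $1$D $\mathrm{TV}$), whereas for the box cases $f_3,f_6$ one must replace decomposability altogether.

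The hard part will be the two-dimensional $\mathrm{TV}$ objectives $f_5,f_6$, where the finite-difference operator $D$ couples coordinates so that neither $\partial f(\ol{x})$ nor $\mc{D}_f(\ol{x})$ factorizes and the one-dimensional reduction evaporates. Here I would split $\partial f(\ol{x}) = B + K$ into its compact part $B=\partial\|D\ol{x}\|_1$ (cf.~\eqref{eq:subdiff-Dl1}) and the normal cone $K$ (cf.~\eqref{eq:def-normal-cone-box}), note that $\tau\,\partial f(\ol{x}) = \tau B + K$ for $\tau>0$ because $K$ is a cone, and try to bound the residual $\EE\,\dist^2(X,\tau B + K) - \EE\,\dist^2(X,\cl\cone(B+K))$. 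The crux---and the reason this remains a conjecture---is to show that the \emph{unbounded} directions of $K$, which the scaled slice $\tau\,\partial f(\ol{x})$ already contains in full, fail to fill out the conic hull only by an amount that is $O(1)$ rather than growing with the number of active constraints; establishing such a residual bound without weak decomposability, together with verifying that $J$ attains its minimum at a finite $\tau$ for these non-compact subdifferentials, is the genuine obstacle.
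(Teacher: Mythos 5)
First, a point of calibration: the statement you are proving is labelled a \emph{conjecture}, and the paper offers no proof of it at all --- its only support is the empirical phase-transition agreement reported in Sect.~\ref{sec:experiments}. So there is no ``paper's proof'' to match; the relevant question is whether your plan actually closes the conjecture. It does not. Your lower bound $\delta(\mc{D}_f(\ol{x}))\le\inf_{\tau\ge 0}J(\tau)$ via $\mc{D}_f(\ol{x})^\circ=\cl\cone\partial f(\ol{x})$ and Moreau is correct (and is essentially Prop.~\ref{prop:Jtau}, which indeed does not need compactness for that direction). Your explicit computations are also correct: $\partial f_2(\ol{x})=\{\eins_S\}\times(-\infty,1]^{S^c}$, the reduction $J_2(\tau)=|S|(1+\tau^2)+(n-|S|)\,\EE[(Z-\tau)_+^2]$, the descent cone $\{d:d_{S^c}\ge 0,\ \eins^\top d\le 0\}$, and --- a genuinely useful observation that refines Remark~\ref{rem:problematic-choices} --- the witness $\ol{p}=(\eins_S,0_{S^c})$ does satisfy \eqref{eq:weak-decomposable-cond} for $f_2$, so the paper's blanket claim that none of these subdifferentials is weakly decomposable is too strong. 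But none of this establishes the upper bound, which is the entire content of the conjecture: for $f_2$ and $f_3$ you announce a ``direct gap estimate'' without performing it, and for $f_3,f_5,f_6$ your final paragraph simply restates the open problem (an $O(1)$ residual bound without weak decomposability) rather than solving it.

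There is also one step that would fail as written. You claim that for $f_5$ ``the normal-cone part admits the zero witness, reducing matters to the decomposability of the underlying TV term.'' For a sum $\partial f(\ol{x})=B+K$ with witness $\ol{p}=\ol{b}+0$, one gets $\la p-\ol{p},\ol{p}\ra=\la b-\ol{b},\ol{b}\ra+\la k,\ol{b}\ra$, and the second term vanishes only if $\ol{b}\perp K$, i.e.\ only if $(D^\top\ol{\alpha})_i=0$ on every coordinate where the nonnegativity constraint is active. That holds in the $f_2$ case because there $\ol{b}_{S^c}=0$ by construction, but it is false in general for $D^\top\ol{\alpha}$ with $D$ a difference operator, so decomposability of $\|D\cdot\|_1$ does not transfer to $f_5$. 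In short: your proposal is a sensible research programme with some correct and even novel ingredients, but the conjecture remains exactly as open after it as before; the honest comparison is that the paper validates the claim numerically while you have sketched, but not executed, an analytical route.
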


\section{Experiments}\label{sec:experiments}

Our objective in this section is to  \emph{empirically}  verify whether
phase transitions occur at
\begin{equation*}
\min_{\tau\ge 0 }J(\tau),
\end{equation*}
which is a \emph{guaranteed} upper bound to the statistical dimension $\delta(\mc{D}_f(\ol{x}))$ \emph{only} in 
some of the considered cases.
We note further, that precise expressions for $J(\tau)$ are only available for $\ell_1$-minimization 
and cases \eqref{def-f1-l1}--\eqref{def-f3-l1-box}. 
For the $\TV$-minimization and  cases \eqref{def-f4-TV}--\eqref{def-f6-TV-box} we
construct phase diagrams by \emph{numerically estimating} $\min_{\tau\ge 0 }J(\tau)$ and comparing  the resulting bounds
to the average-case results for both solving the reconstruction problems \eqref{eq:minf-exact-constr} 
and certifying uniqueness  by the procedure derived in  Sect.~\ref{sec:individual-unique}.

\subsection{1D Empirical Phase Transitions and Theoretical Bounds}
In this section we address  $\ell_1$-minimization and one dimensional $\TV$-minimization without and with constraints. The 1D TV-regularizer reads
$\|\partial_n x\|_1$,   where $\partial_n \in \R^{n-1 \times n}$ is the one-dimensional discrete derivative operator. Applying $D=\partial _n$ to a signal $x$ gives the 
the entries' differences for consecutive indices

\begin{align}
  Dx = \partial_n x = \begin{pmatrix}
      x_1 - x_0 \\ \vdots \\ x_n - x_{n-1}
  \end{pmatrix}.
\end{align}
\begin{figure}
 \includegraphics[width=\textwidth]{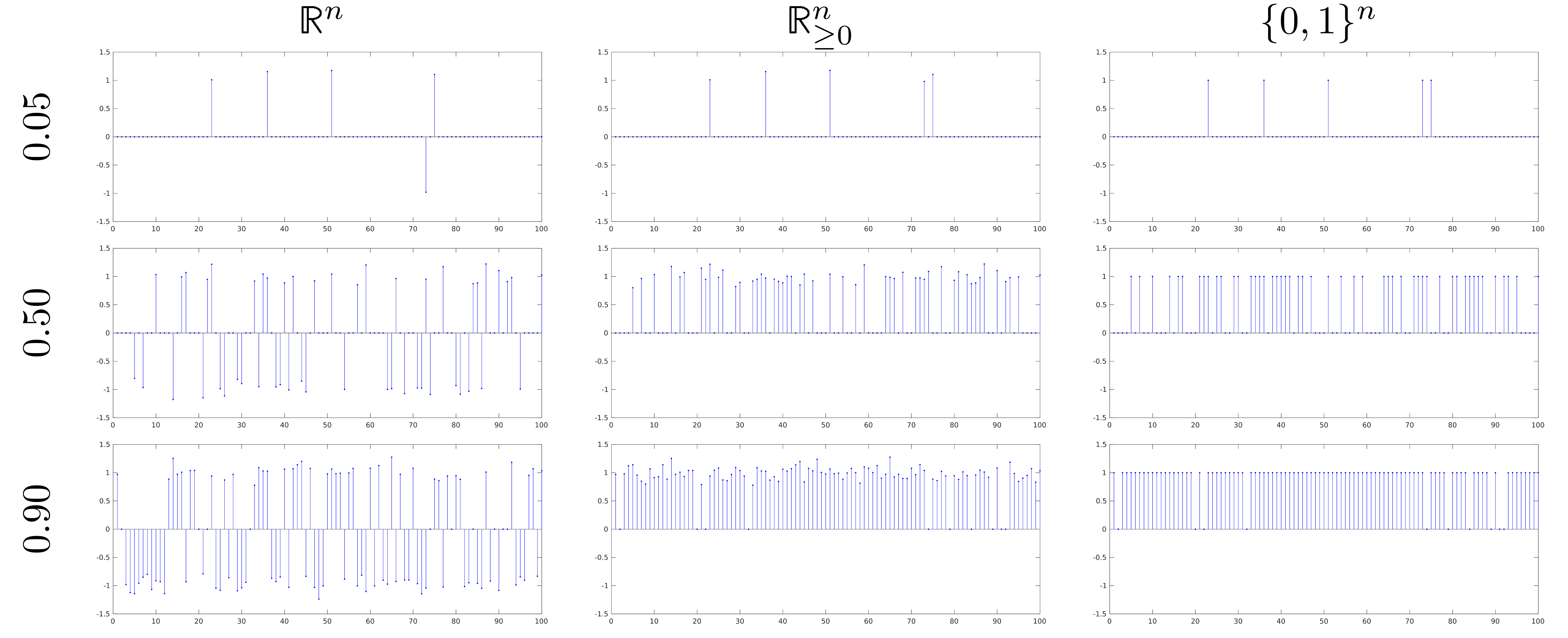}
  \caption{\emph{Testset for 1D sparse signals}. The rows display signals $\ol{x}$ with equal number of non-zeros in $\ol{x}$, 
  i.e. $\|\ol{x}\|_0$.
  The columns display signals that have entries sampled either from $\R, \R_+$ or $\{0,1\}$
  from left to right. 
  \label{fig:1DSTestset}}
\end{figure}
\begin{figure}
  \includegraphics[width=\textwidth]{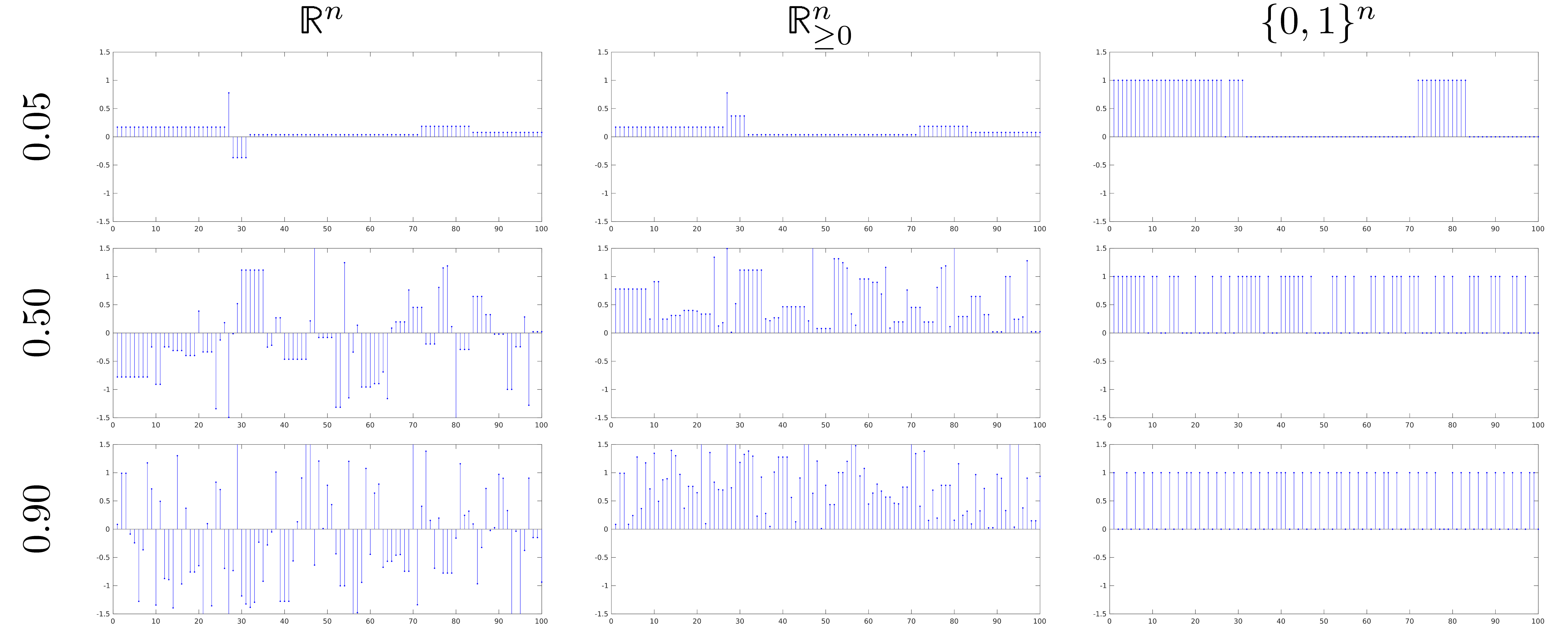}
  \caption{\emph{Testset for 1D gradient sparse signals}. The rows display signals $\ol{x}$ with equal number of non-zeros in $D\ol{x}$, 
  i.e. $\|D\ol{x}\|_0$,  with respect to the 1D finite difference operator.
  The columns display signals that have entries sampled either from $\R, \R_+$ or $\{0,1\}$
  from left to right.  
  \label{fig:21DTTestset}}
\end{figure}
\subsubsection{Testset}
Our testset consists of several randomly generated signals with specified  relative sparsity $\rho$ ranging from $0.05$ to $0.95$ with step size $0.05$. For each of these relative sparsities we create a signal so that 

\begin{align}
  \rho \approx \frac{\|\ol{x}\|_0}{n} \qquad \text{ resp. } \qquad  \rho \approx \frac{\|D\ol{x}\|_0}{p}.
\end{align} 
Hence, we cover almost the full range from highly sparse to dense signals. In addition, we create three different classes of signals: with real-valued, non-negative or binary entries, i.e. $x_i\in\R$, $x_i\ge 0$ are $x_i \in \{0,1\}$ for all $i\in{[n]}$. 

Creating random  real-valued, non-negative or binary signals of a given sparsity is immediate.
To create signals that are sparse in a transformed domain we use an idea from of \cite{Nam2013}. Instead of choosing the support of
$\ol{x}$ uniformly at random, ones chooses a set $\Lambda \subseteq {[p]}$ in order to create a signal with $\row{D}{\Lambda} \ol{x} = 0$. Having the subset $\Lambda$ and a randomly created vector $v$ with non-zero entries we obtain the desired signal by calculating 

\begin{align}
\ol{x} = (I - \row{D}{\Lambda}^\top(\row{D}{\Lambda} \row{D}{\Lambda}^\top)^{-1}\row{D}{\Lambda})v.
\end{align}
Since $D\ol{x}$ represents differences of signal entries for consecutive indices, taking the absolute value of each  signal entry will most likely not change the cosupport of the resulting signal. Therefore, we can use this method to generate non-negative signals. 
Binary signals can be obtained by using Algorithm \ref{alg:convertsignal}.

\begin{algorithm}[t]
  \begin{tabular}{ll}
    \textbf{Input}: & $\Lambda \subseteq [n-1]$\\
    \textbf{Output}: & Binary signal $\ol{x} \in \{0,1\}^n$
  \end{tabular}
  \hrule
  $\ol{x}_0 = 1$\;
  \For{$i \in [n-1]$}{
    \eIf{$i\in \Lambda$}{
      $\ol{x}_{i+1} \leftarrow  \ol{x}_i$\; 
    }{
      $\ol{x}_{i+1} \leftarrow  1-\ol{x}_i$\; 
    }
  }
  \KwRet{$\ol{x}$;}
  \caption{Algorithm for converting a random gradient sparse signal (w.r.t. the 1D finite differences operator) into a binary gradient 
  sparse signal. 
  \label{alg:convertsignal}}
\end{algorithm}

\subsubsection{$\ell_1$-Minimization}
In this section we compare the existing tight upper bounds for the statistical dimension $\delta(\mc{D}_f(\ol{x}))$
for $f$ defined in \eqref{def-f1-l1}--\eqref{def-f3-l1-box} to the empirical phase transition obtained for the above set of signals. 

To this end we first set the ambient dimension $n=100$ and generate 10 instances of a sparse 1D signal, see Figure \ref{fig:1DSTestset}, and a random Gaussian matrix $A\in\R^{m\times }$. For each pair of relative sparsity and given number $m$ of random linear measurements we verify if the signal is the unique 
solution of  \eqref{eq:minf-exact-constr} with $f$ defined in \eqref{def-f1-l1}--\eqref{def-f3-l1-box}.
Uniqueness is tested as detailed in Sect. \ref{sec:individual-unique}.
We used Mosek\footnote{\url{https://www.mosek.com/}} to solve the optimization problems.
The gray value plots in Figure \ref{fig:phase_transitions}, first row, show the empirical probability that a given 1D sparse signal
 is \emph{uniquely} reconstructed by the convex relaxation. White means recovered and unique, and black non recovered. The red 
 curves show $\min_{\tau\ge 0 }J(\tau)$, plotted with Mathematica\footnote{\url{https://www.wolfram.com/mathematica/}}, and separate these regions accurately. 

\subsubsection{$\TV$-Minimization}
To generate the phase diagrams we proceed as above but use the gradient sparse 1D test signals, see Figure \ref{fig:21DTTestset}.
Since explicit curves for $J$ are missing we use an approximation to the upper bound $J(\tau) = \mathbb{E}\left[\dist^2 (X,\tau \partial f(\ol{x})) \right]$, where $X \sim \mc{N}(0,I)$. To approximate the expected value we use a large $k =10000$ and calculate

\begin{equation*}
  h_k(\tau) = \frac 1k \sum_{i=1}^k \dist^2 (X_i,\tau \partial f(\ol{x})) =: J_{\rm approx}(\tau) \approx J(\tau),
\end{equation*}
where each $X_i$ is sampled from the normal distribution $\mc{N}(0,I)$. Additionally we know from \cite[Lem C.1]{Amelunxen2014living} that the minimum is unique and lies in the interval $\left[0,\frac{2\|\ol{x}\|_2}{b}\right]$ with $b \le \|w\|_2$ for all $w \in \partial f(\ol{x})$, provided
$\partial f(\ol{x})$ is bounded. This latter condition is only satisfied for $f_4$, but not for $f_5$ and $f_6$. Nevertheless, we successfully minimize numerically the
univariate function $J_{\rm approx}$ above by using the BiSection~\cite{Faires1985} method.
Note that for computing each $\dist^2 (X_i,\tau \partial f(\ol{x}))$ one is faced with solving a quadratic program. Note that
$\partial f(\ol{x})$ is described by linear equalities and inequalities, compare \eqref{eq:subdiff-Dl1} and \eqref{eq:subdiff-ind-box}.
The empirical probability of uniqueness  shown as gray value plots in Figure \ref{fig:phase_transitions}, middle row,
is plotted along with the curves describing   $\min_{\tau\ge 0} J_{\rm approx}(\tau)$.
We emphasize the perfect agreement of the phase transition.

\subsection{2D Empirical Phase Transitions and Theoretical Bounds}
\begin{figure}
 \includegraphics[width=\textwidth]{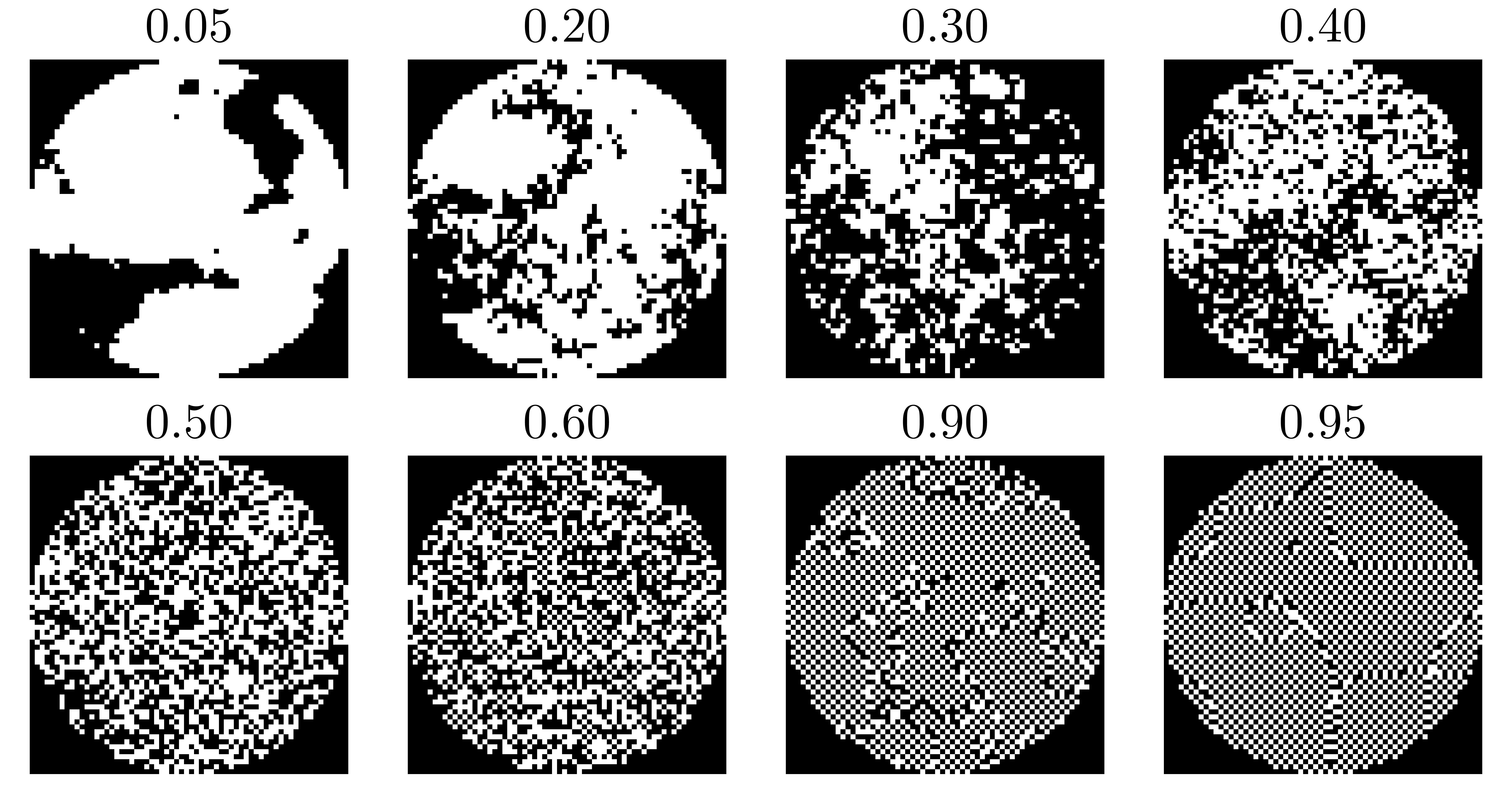}
  \caption{
  \emph{Testset for 2D gradient sparse images}. Each image label shows the relative sparsity of $D\ol{x}$, 
  i.e. $\|D\ol{x}\|_0/p$,  with respect to the 2D finite difference operator. Our test images are $64\times 64$.
  \label{fig:2DTestset}}
\end{figure}
\subsubsection{Testset}
 Creating gradient sparse images with a given gradient sparsity is more involved than in the one dimensional case. 
 Using random support sets $\Lambda \subseteq [p]$ with $|\Lambda| \ge n$ and the projecting technique used in the 1D case,
 one would most likely obtain constant 2D images. Thus we use a different approach to construct random gradient sparse images. 
 To this end, we randomly add binary images with homogeneous areas and use the modulo operation to binarize again the result.
 We show some results in Figure \ref{fig:2DTestset}. Since it is easy to identify all connected components in a binary image, we can assign random real values to the different connected components. The resulting discrete gradient of the new image has the same number of non-zeros, but the image is \emph{non binary}.
\begin{figure}
  \centering
 \includegraphics[width=\textwidth]{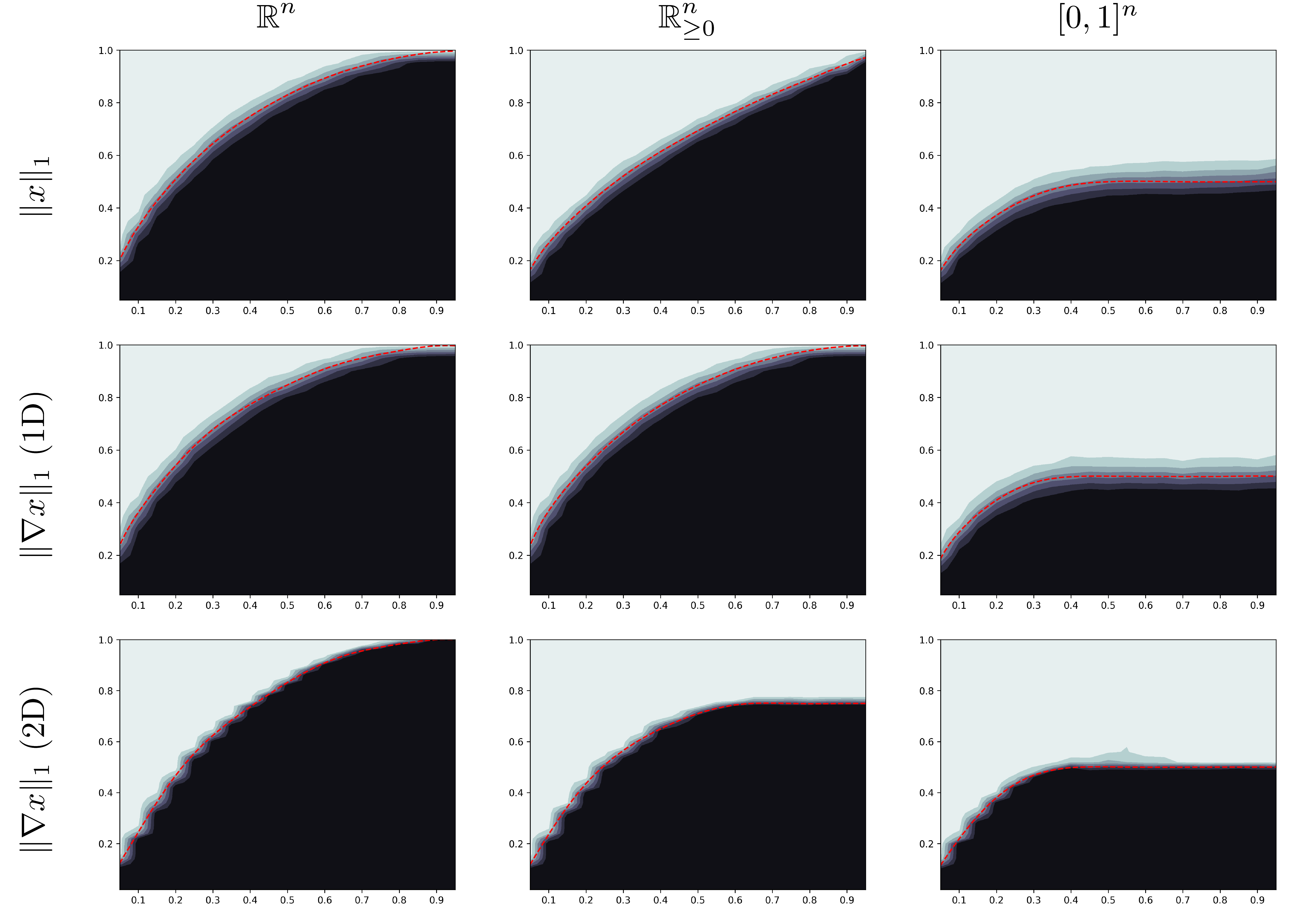}
  \caption{ \emph{Phase diagrams for random measurements.} 
  The gray value shows the empirical probability of uniqueness for each pair of parameters 
  (relative sparsity or relative gradient sparsity, \#measurements):
   black $\leftrightarrow 0\%$ uniqueness rate, white $\leftrightarrow 100\%$ uniqueness rate.
  Both regions are accurately separated by our approximation to the statistical dimension. 
  Rows from top to bottom show results for: $\ell_1$-, 1D $\TV$- and 2D $\TV$-minimization. 
  Columns for left to right show the signal/image entries: $\R^n$, $\R^n_+$ or $\{0,1\}^n$. 
  \label{fig:phase_transitions}}
\end{figure}
\begin{figure}
  \centering
  \includegraphics[width=\textwidth]{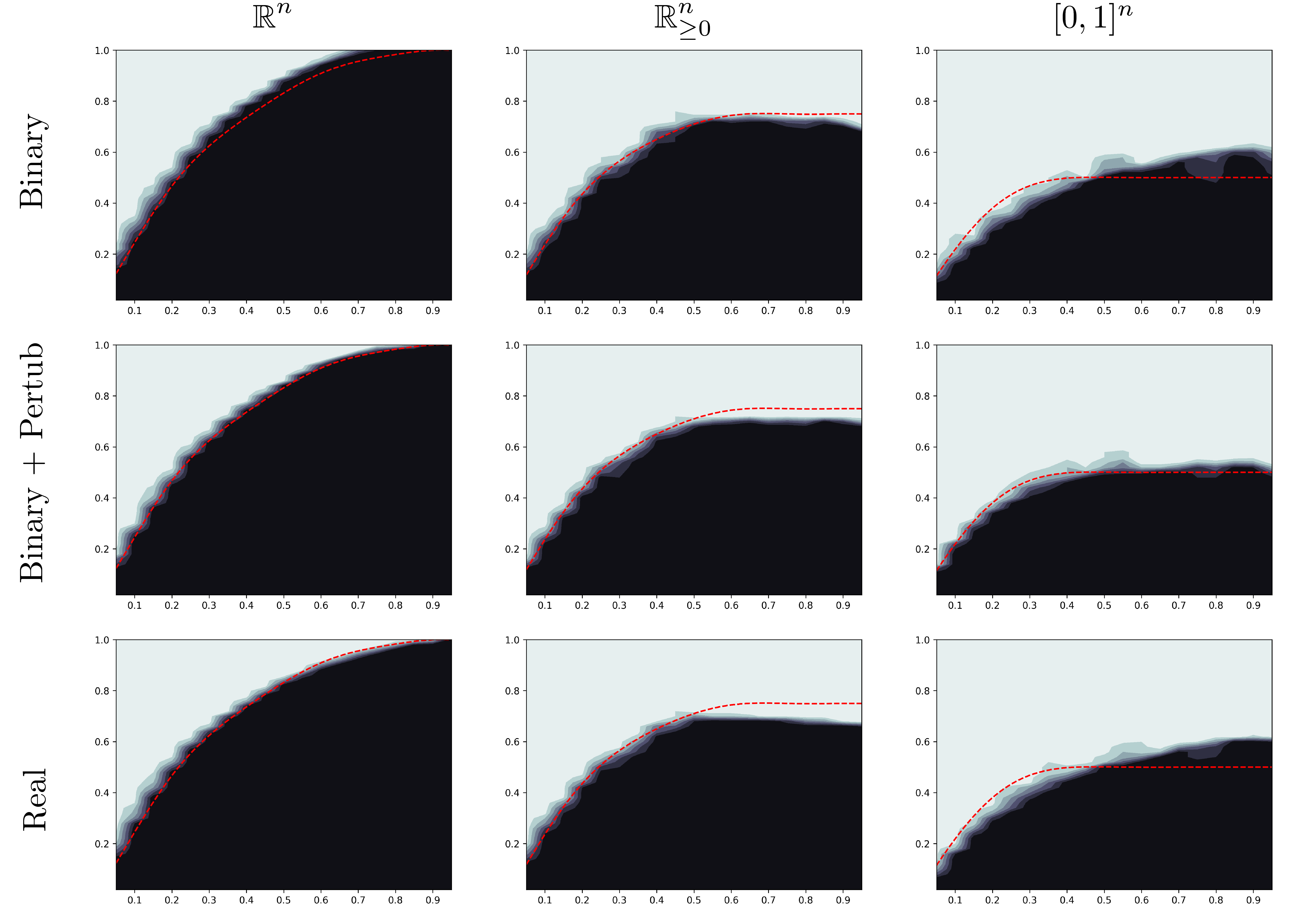}  
  \caption{ \emph{Phase diagrams for tomographic measurements.} As in  Figure \ref{fig:phase_transitions}
  we display the empirical probability of uniqueness in the case of \emph{tomographic measurements} for each pair of parameters 
  (relative sparsity or relative gradient sparsity, \#measurements):
   black $\leftrightarrow 0\%$ uniqueness rate, white $\leftrightarrow 100\%$ uniqueness rate.
   Rows from top to bottom show results for: binary, perturbed binary and standard matrices, i.e. with nonnegative real entries.
   The results demonstrate a remarkable agreement of the empirical phase transitions for tomographic recovery with the
   approximated curve based on the statistical dimension for random measurements.
   \label{fig:phase_transitions_tomo}}
\end{figure}
\subsubsection{Tomographic Measuremnts}
Making our experiments more general, we applied the idea from \cite{roux2014efficient} to reduce the influence of angles in tomographic reconstructions. The basic idea is, that each projection should carry the same amount of information independent of the angle. In Figure \ref{fig:tomo_rays} we illustrate the difference the effect on the
 tomographic projections when sensing an images embedded in a rectangle shape resp. to a circular shape. We emphasize that our testset for 2D contains only images embedded in a circular shape.
\begin{figure}
  \begin{subfigure}{0.45\textwidth}
    \centering
    \includegraphics[height=4cm]{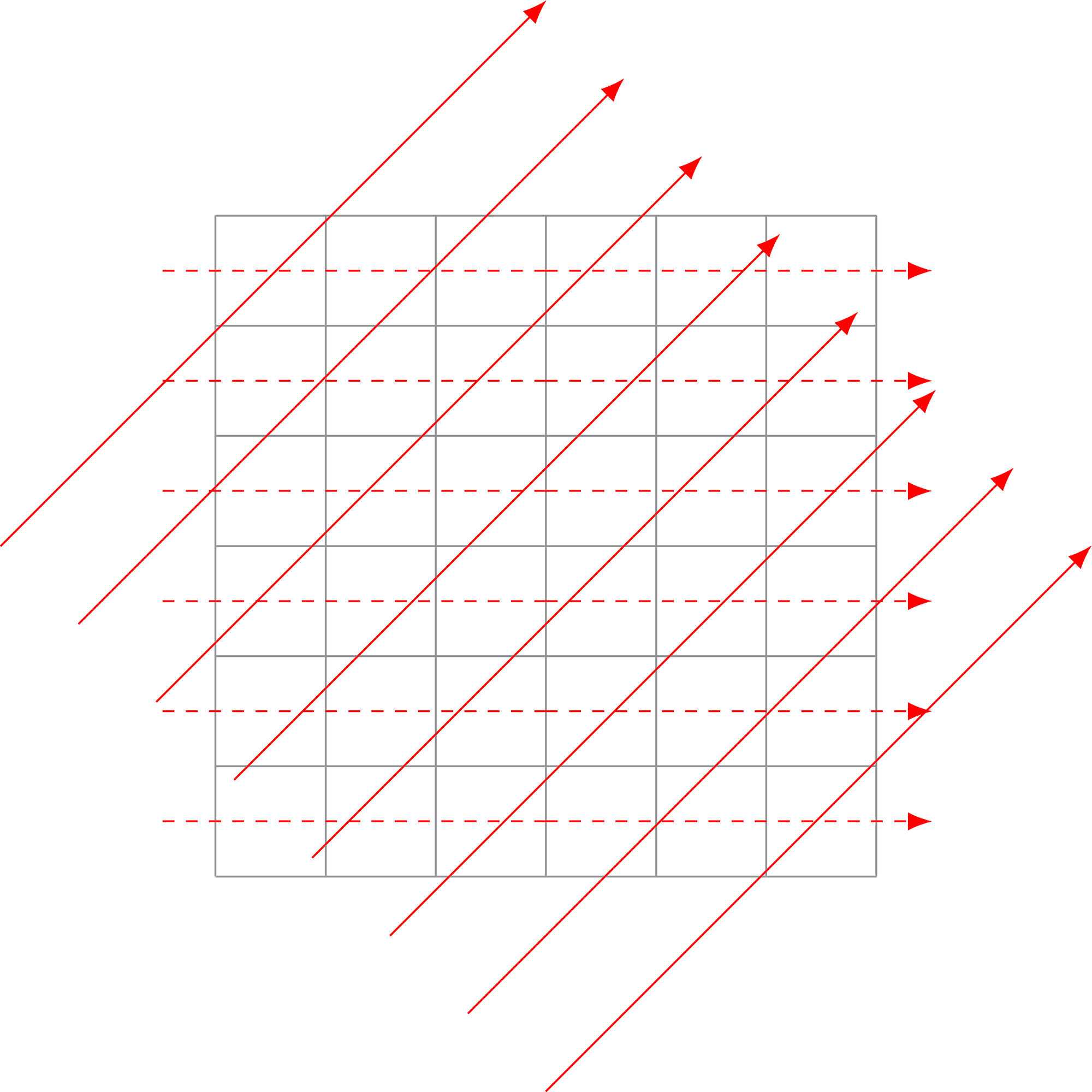}
    \subcaption{rectangle \label{fig:tomo_rectangle}}
  \end{subfigure}
  \begin{subfigure}{0.45\textwidth}
    \centering
    \includegraphics[height=4cm]{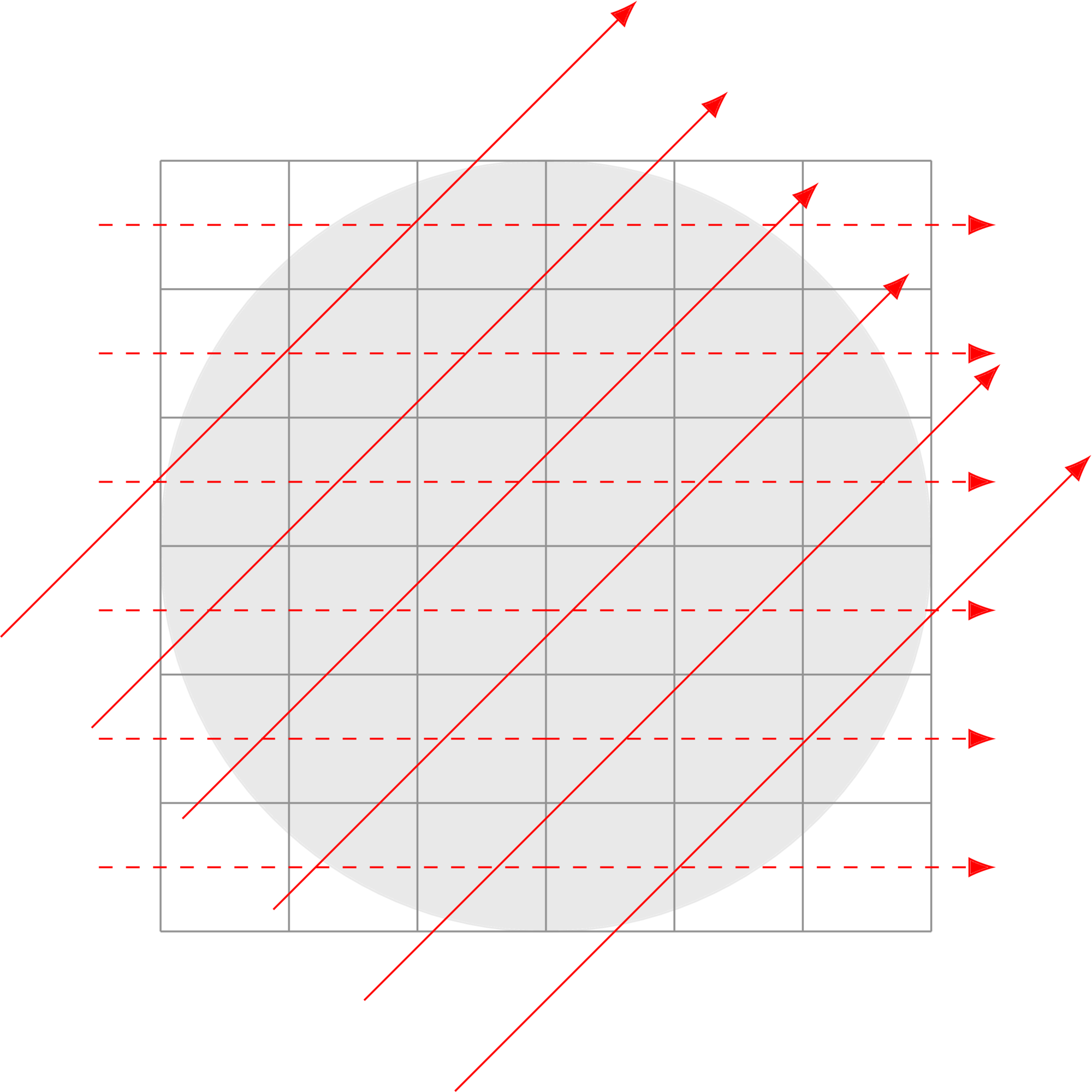}
    \subcaption{circle \label{fig:tomo_circle}}
  \end{subfigure}
  \caption{Illustration of parallel projections of a rectangle image (\subref{fig:tomo_rectangle}) and of an image embedded in a circular mask (\subref{fig:tomo_rectangle}).
  In  (\subref{fig:tomo_rectangle}) one needs at different angles a different amount of parallel rays to cover the whole image. For sensing an image in a circular mask
  the number of parallel rays covering the whole image would  be the same. \label{fig:tomo_rays}}
\end{figure}
\subsubsection{$\ell_1$-Minimization}
The phase diagrams for 2D images coincide with the ones for 1D signals, shown in Figure \ref{fig:phase_transitions}, first row,
and are omitted here. The two regions are accurately separated by the $\min_{\tau} J(\tau)$ curve.

\subsubsection{$\TV$-Minimization}
For fixed ambient dimension $n=64\cdot 64$ we choose a relative image sparsity $\frac{\|D\ol{x}\|_0 }{p} \in [0,1] $, with $D \in \R^{p \times n}$,
corresponding to a gradient sparse test image and also choose the number of measurements $m$. 
Measurements can be Gaussian or tomographic. We consider three types of tomographic matrices:
\begin{itemize}
\item binary, as described in \cite{roux2014efficient};
\item perturbed binary, that is we slightly perturb the nonzero entries above, in order to remove linear dependencies between columns, and
\item standard tomographic matrices, with nonnegative real entries.
We use the MATLAB routine \texttt{paralleltomo.m} from the AIR Tools package \cite{AirTools} that implements such a tomographic matrix for a arbitrary vector of angles. We choose equidistant angles,  set \texttt{N} $=64$ the image size and use the default value of \texttt{p}, i.e. the number of parallel rays for each angle \texttt{p = round(sqrt(2)*N)} to obtain a tomographic matrix of size $m\times n$.
\end{itemize}

Results are presented in Figure \ref{fig:phase_transitions}, last row, and Figure \ref{fig:phase_transitions_tomo}.
We show the empirical probability over the 10 repetitions.
The success rate of image reconstruction \emph{equals} the uniqueness result and is displayed by gray values: black $\leftrightarrow 0\%$ uniqueness rate, white $\leftrightarrow 100\%$ uniqueness rate.

All plots display a phase transition and thus exhibit regions where exact image reconstruction has probability equal or close to one.
All regions are accurately separated by our approximation to the statistical dimension.

%
%
%
\section{Conclusions}\label{sec:conclusions}
The present work was motivated by the need of accurately describing how much undersampling
is tolerable to uniquely recover a sparse or gradient sparse box constrained signal from tomographic measurements. 
Our results show sharp average-case phase transitions from non-uniqueness and non-recovery to uniqueness and exact recovery as guaranteed by compressed sensing for random measurements.
Moreover, we show that the phase transition occurs approximately at  the statistical dimension of the descent cone of the
structure enforcing objective, irrespectively of the employed measuring device. 
The approximation of the statistical dimension is computed empirically but cross-checked by the verifiable uniqueness test developed in this work. 

\section*{Appendix}
We collect few results from \cite{Rockafellar70convexanalysis,rockafellar1wets}.
The \textit{subdifferential} of  a function $f \colon \R^n \to \ol{\R}$ at a point $\ol{x}$ where $f(\ol{x}) \in \R$, is defined by
\begin{equation} \label{subdifferential_neu}
\partial f(\ol{x}) = \big\{p \in \R^{n} \colon f(x) - f(\ol{x}) \geq \langle p, x-\ol{x} \rangle,\quad \forall x \in \R^{n}\big\}.
\end{equation}
We set $\partial f(\ol{x}) = \emptyset$ if $f(\ol{x})$ is not finite.
A function $f \colon \R^{n} \to \ol{\R}$ is said to be
\textit{subdifferentiable} at $\ol{x}$ if
$\partial f(\ol{x}) \not = \emptyset$. Then the elements of $\partial f(\ol{x})$ are called
\textit{subgradients} of $f$ at $\ol{x}$.
For  a proper, convex function $f$ and
$\ol{x} \in \rint(\dom f)$, the subdifferential $\partial f(\ol{x})$ is nonempty. Furthermore,
$\partial f(\ol{x})$ is nonempty {\rm and} bounded if and only if
$\ol{x} \in \intr(\dom f)$. Hence only for $f_1$ and $f_4$ from \eqref{def-f1-l1} and \eqref{def-f6-TV-box}
$\partial f(\ol{x})$ is compact. For the remaining choices of $f$ in \eqref{def-f} $\partial f(\ol{x})$ is unbounded,
since in these cases our signals $\ol{x}$ lie on the boundary of  $\dom f = \R^n_+$ or  of 
$\dom f = \{0,1\}^n$.

The \emph{relative interior} of a non-empty convex set $C$ is the
interior relative to its affine hull,
\begin{equation*}
  \rint C = \big\{ x \in \aff C \colon \exists\veps > 0
  \text{ such that } (x + \veps\B(0))\cap \aff \,C \subset C \big\}.
\end{equation*}
The \emph{relative boundary} of $C$ is
\begin{equation*}
  \rbd C = \cl C \setminus \rint C.
\end{equation*}
We always have \cite[Cor. 6.6.2]{Rockafellar70convexanalysis}
\begin{equation} \label{eq:sum-relint}
\rint(C_1+C_2) = \rint(C_1) + \rint(C_2).
\end{equation}
Recall that the \emph{affine hull} of some set $C \subset \R^{n}$ is the
set of all affine combinations of its points,
$$
  \mrm{aff}\,C = \big\{ \lambda_{1} x^{1} + \dotsb + \lambda_{d} x^{d} 
  \colon \lambda_{1} + \dotsb + \lambda_{d} = 1,\;
  x^{1},\dotsc,x^{d} \in C
  \big\},
$$
while the \emph{conic hull} equals 
$$
\cone C = \big\{ \lambda_{1} x^{1} + \dotsb + \lambda_{d} x^{d} 
  \colon \lambda_{1}, \dots, \lambda_{d} \ge 0,\;
  x^{1},\dotsc,x^{d} \in C
  \big\}.
$$

\section*{Acknowledgment}
JK gratefully acknowledges the support by the German Science Foundation, grant GRK 1653.
\bibliographystyle{amsalpha}
\bibliography{tex/HDLiterature}
\vspace{2mm} \noindent \footnotesize
\begin{minipage}[b]{10cm}
Jan Kuske\\
Mathematical Imaging Group,\\
Heidelberg University, Germany.\\
Email: jan.kuske@iwr.uni-heidelberg.de
\end{minipage}

\vspace{2mm} \noindent \footnotesize
\begin{minipage}[b]{10cm}
Stefania Petra\\
Mathematical Imaging Group,\\
Heidelberg University, Germany.\\
Email: petra@math.uni-heidelberg.de
\end{minipage}
%
%
\end{document}